\documentclass[a4paper,11pt]{article}

\usepackage{amsfonts}
\usepackage{epsf,latexsym,amsfonts,amsbsy,mathrsfs}
\usepackage{cite}
\usepackage{amsmath, amssymb, amsthm,amscd,amsxtra}
\usepackage[]{graphicx,subfigure}
\usepackage{float}
\usepackage{multirow}
\usepackage{bbm}
\usepackage{stmaryrd}
\usepackage{epsfig}

\setlength{\textwidth}{165mm}
\setlength{\textheight}{23cm}
\setlength{\headheight}{0cm}
\setlength{\topmargin}{-1.0cm}
\setlength{\oddsidemargin}{0.0in}
\setlength{\evensidemargin}{2.0cm}

\newtheorem{theorem}{Theorem}[section]
\newtheorem{lemma}[theorem]{Lemma}

\newtheorem{algorithm}{Algorithm}[section]

\theoremstyle{definition}

\theoremstyle{remark}
\newtheorem{remark}[theorem]{Remark}

\numberwithin{equation}{section}

\newcommand{\ii}{\rm i}

\newcommand{\nn}{\nonumber}


\newcommand{\Bn}{{\boldsymbol{n}}}

\newcommand{\Bq}{{\boldsymbol{q}}}
\newcommand{\Br}{{\boldsymbol{r}}}

\newcommand{\Bv}{{\boldsymbol{v}}}
\newcommand{\Bw}{{\boldsymbol{w}}}
\newcommand{\Bx}{{\boldsymbol{x}}}



\newcommand{\BA}{{\boldsymbol{A}}}

\newcommand{\BD}{{\boldsymbol{D}}}

\newcommand{\BI}{{\boldsymbol{I}}}

\newcommand{\BL}{{\boldsymbol{L}}}
\newcommand{\BM}{{\boldsymbol{M}}}

\newcommand{\BQ}{{\boldsymbol{Q}}}
\newcommand{\BR}{{\boldsymbol{R}}}
\newcommand{\BS}{{\boldsymbol{S}}}

\newcommand{\BU}{{\boldsymbol{U}}}
\newcommand{\BV}{{\boldsymbol{V}}}

\newcommand{\Cg}{{\mathcal G}}

\newcommand{\Ct}{{\mathcal T}}

\newtheorem{exm}{Example}[section]



\title{A Robust Multilevel Method for Hybridizable
Discontinuous Galerkin Method for the Helmholtz Equation}

\author{Huangxin Chen\thanks{School of Mathematical Sciences, Xiamen University, Xiamen, 361005, People's Republic of China ({\tt chx@xmu.edu.cn}).}~~, Peipei Lu$^\dag$, and Xuejun Xu\thanks{LSEC, Institute of
Computational Mathematics and Scientific/Engineering Computing,
Academy of Mathematics and System Sciences, Chinese Academy of
Sciences, P.O.Box 2719, Beijing, 100190, People's Republic of China ({\tt lupeipei@lsec.cc.ac.cn, xxj@lsec.cc.ac.cn}).}}

\date{}
\begin{document}
\maketitle
\begin{abstract}
A robust multilevel preconditioner based on the hybridizable
discontinuous Galerkin method for the Helmholtz equation with high wave number is
presented in this paper. There are two keys in our algorithm, one
is how to choose a suitable intergrid transfer operator, and the other is
using GMRES smoothing on coarse grids. The multilevel method is
performed as a preconditioner in the outer GMRES iteration. To give
a quantitative insight of our algorithm, we use local Fourier analysis
to analyze the convergence property of the proposed multilevel
method. Numerical results show that for
fixed wave number, the convergence of the algorithm is mesh
independent. Moreover, the performance of the algorithm depends
relatively mildly on wave number.
\end{abstract}

{\bf Key words.} Multilevel method, Helmholtz equation,
high wave number, hybridizable discontinuous Galerkin method, GMRES
method, local Fourier analysis
\section{Introduction}
In this paper we consider the Helmholtz equation with Robin boundary
condition which is the first order approximation of the radiation
condition. The equation is written in a mixed form as follows: Find
$({\Bq},u)$ such that
\begin{align}
\label{HS}
{\bf i}\kappa{\Bq}+\nabla u&=0 \qquad {\rm in }\ \Omega,\\
\label{HSE}{\bf i}\kappa u+{\rm div}\, {\Bq}&=f  \qquad {\rm in }\ \Omega,\\
\label{HE} -{\Bq}\cdot {\Bn}+u&=g \qquad {\rm on }\
\partial\Omega,
\end{align}
where $\Omega \subset \mathbb R^d$, $d=2,3$, is a polygonal or
polyhedral domain, $\kappa>0$ is known as the wave number, ${\bf
i}=\sqrt{-1} $ denotes the imaginary unit, and ${\Bn}$ denotes the
unit outward normal to $\partial \Omega$.  Helmholtz equation finds
applications in many important fields, e.g., in acoustics, seismic
inversion and electromagnetic, but how to solve the Helmholtz
equation efficiently is still of great challenge.

The strong indefiniteness has prevented the standard multigrid
methods from being directly applied to the discrete Helmholtz
equation. In \cite{EEO01}, Elman, Ernst and O'Leary modified the
standard multigrid algorithm by adding GMRES iterations as corrections on coarse
grids and using it as an outer iteration. But in order to obtain
a satisfactory convergence behavior, a relatively large number of GMRES
smoothing should be performed on coarse grids which leads to
relatively large memory requirement, so the optimality of the multigrid algorithm  cannot be guaranteed. In \cite{chen}, the authors
utilized the continuous interior penalty finite element methods
 \cite{Wu12,Wu12-hp} to construct the stable coarse grid
correction problems, which reduces the steps of GMRES smoothing on
coarse grids. Based on the fact that the error components which
cannot be reduced by the standard multigrid can be factorized by
representing it as the product of a certain high-frequency Fourier
component and a ray function, Brandt and Livshits introduced so-called
wave-ray multigrid methods in \cite{Brandt97,Livshits}. Although
this method exhibits high convergence rate with increasing wave number,
it does not easily generalize to unstructured grids and complicated
Helmholtz problems. Besides, shifted Laplacian preconditioners
\cite{Erlangga04,Erlangga06} and sweeping preconditioners
\cite{Engquist1,Engquist2} based on an approximate $LDL^t$
factorization were introduced to solve the Helmholtz equation with high wave number. A survey of the development of fast
iterative solvers  can be found in \cite{Erlangga08,Ernst11}.

Hybridizable discontinuous Galerkin (HDG) method has two main
advantages in the discretization of Helmholtz equation. First, it is
a stable method, which means that the discrete system is always well-posed without any mesh
constraint. Rigorous convergence analysis of the HDG method for
Helmholtz equation can be found in \cite{HDG}. Second, comparing to
standard discontinuous Galerkin method, HDG method results in
significantly reducing the degrees of freedom, especially when the
polynomial degree $p$ is large. However, to the best of our
knowledge, no efficient iterative method or preconditioner for HDG
discretization system for the Hemholtz equation in the literature
has been proposed.

The hybridized system is a linear equation for Lagrange multipliers
which is obtained by eliminating the flux as well as the primal
variable. For the second-order elliptic problems, a Schwarz
preconditioner  for the algebraic system was presented in \cite{JG}. In \cite{J-S}, the authors
consider the application of a variable V-cycle multigrid algorithm
for the hybridized mixed method for second-order elliptic boundary
value problems. In their multigrid algorithm, both smoothing
and correction on coarse grids are based on standard piecewise linear
continuous finite element discretization system. The convergence of
the multigrid algorithm is dependent on an assumption that the number of
smoothings increases in a specific way (see Theorem 3.1 in
\cite{J-S} for details). The critical ingredient in the algorithm is
how to choose a suitable intergrid transfer operator. Numerical experiments in \cite{J-S}
show that certain `obvious' transfer operators lead to slow
convergence.

The objective of this paper is to propose a robust multilevel method for the HDG method approximation of the
Helmholtz equation. The main ingredients
in multilevel method are how to construct coarse grid correction
problem and perform efficient smoothing. Since strong indefiniteness
arises for Helmholtz equation with large wave number, standard
Jacobi or Gauss-Seidel smoothers become unstable on the coarse
grids. Motivated by the idea in \cite{EEO01}, we use GMRES smoothing
for those coarse grids. Unlike the smoothing strategy in
\cite{EEO01}, the number of GMRES smoothing steps in our algorithm
is much smaller, even if one smoothing step may guarantee the convergence of our multilevel algorithm. Moreover, both smoothing on fine and coarse grids
in our multilevel method are based on hybridized system of Lagrange
multiplier on each level.

{\it Local Fourier analysis} (LFA) has been introduced for multigrid
analysis by Achi Brandt in 1977 (cf. \cite{Brandt77}). We mainly
utilize the LFA to analyze smoothing properties of relaxations and
convergence properties of two  and three level methods in the one
dimensional case. This may provide quantitative insights into the
proposed multilevel method for Helmholtz problem
(\ref{HS})-(\ref{HE}). A survey for LFA can be found in
\cite{WJ04}.

The remainder of this paper is organized as follows:  In section 2, we firstly
review the formulation of HDG method for the Helmholtz equation and
present our multilevel algorithm. The stability
estimate of the intergrid transfer operator will be carried out in section 3. Section 4
is devoted to the LFA of the multilevel method in one dimensional
case. Finally, we give some numerical results to
demonstrate the performance of our multilevel method.

\section{ HDG method and its multilevel algorithm}\label{notation}
Let $\mathcal T_h$ be a quasi-uniform subdivision of
$\Omega$, and denote the collection of edges (faces) by $\mathcal
E_h$, while the set of interior edges (faces) by $\mathcal E_h^0$ and
the collection of element boundaries by $\partial \mathcal
T_h:=\{\partial T| T\in \mathcal T_h\}$.
We define $h_T:={\rm diam}(T)$ and let $h:={\rm max}_{T\in \Ct_h}h_T$.
Throughout this paper we
use the standard notations and definitions for Sobolev spaces (see,
e.g., Adams\cite{Adams}).

On each element $T$ and each edge (face) $F$, we define the local
spaces of polynomials of degree $p\geq 1$:
$${\BV}(T):=(\mathcal P_p(T))^d,\qquad W(T):=\mathcal P_p(T),\qquad M(F):=\mathcal P_p(F),$$
where $\mathcal P_p(S),S=T$ or $F$, denotes the space of polynomials
of total degree at most $p$ on $S$. The corresponding global finite
element spaces are given by
\begin{align*}
{\BV}_h^p:&=\{{\Bv}\in {\BL}^2(\Omega)\  |\ {\Bv}|_T\in {\BV}(T)\ {\rm for\  all }\ T\in \mathcal T_h\},\\
W_h^p:&=\{w\in L^2(\Omega)\ |\ w|_T\in W(T)\ {\rm for\ all }\ T\in \mathcal T_h\},\\
M_h^p:&=\{ \mu\in L^2(\mathcal E_h)\ |\  \mu|_F\in M(F)\ {\rm for\ all }\ F\in \mathcal E_h\},
\end{align*}
where ${\BL}^2(\Omega):=( L^2(\Omega))^d$, $L^2(\mathcal
E_h):=\Pi_{F\in \mathcal E_h}L^2(F)$. On these spaces we define the
bilinear forms
\begin{align*}
({\Bv},{\Bw})_{\mathcal T_h}:=\sum_{T\in \mathcal
T_h}({\Bv},{\Bw})_{T}, \ (v,w)_{\mathcal T_h}:=\sum_{T\in \mathcal
T_h}(v,w)_T, \ {\rm and} \ \langle v,w \rangle_{\partial \mathcal
T_h}:=\sum_{T\in \mathcal T_h}\langle v,w\rangle_{\partial T},
\end{align*}
with $({\Bv},{\Bw})_{T}:=\int_T {\Bv}\cdot {\Bw}dx$,
$(v,w)_T:=\int_T { v} { w}dx$ and $\langle v,w\rangle_{\partial
T}:=\int_{\partial T} { v} { w}ds$.

The HDG method yields finite element approximations $({\Bq}_h,
u_h,\hat u_h)\in {\BV}_h^p\times W_h^p\times M_h^p$ which satisfy
\begin{align}
\label{P1}
({\bf i}\kappa {\Bq}_h,\overline{\Br})_{\mathcal T_h}-(u_h, \overline{{\rm div \, {\Br}}})_{\mathcal T_h}+\langle \hat u_h,\overline {{\Br}\cdot {\Bn}}\rangle_{\partial \mathcal T_h}&=0,\\
\label{P2}
({\bf i}\kappa u_h,\overline w)_{\mathcal T_h}-({\Bq}_h,\overline{\nabla w})_{\mathcal T_h}+\langle \hat {\Bq}_h\cdot {\Bn} ,\overline w\rangle_{\partial \mathcal T_h}&=(f,\overline w)_{\mathcal T_h} ,\\
\label{P3}
\langle -\hat{\Bq}_h\cdot {\Bn}+ \hat u_h,\overline \mu\rangle_{\partial \Omega}&=\langle g, \overline \mu\rangle_{\partial \Omega},\\
\label{P4} \langle \hat{\Bq}_h\cdot {\Bn},\overline
\mu\rangle_{\partial \mathcal T_h \backslash
 \partial \Omega}&=0,
\end{align}
for all ${\Br}\in {\BV}_h^p$, $w\in W_h^p$, and $\mu\in M_h^p$,
where the overbar denotes complex conjugation. The numerical flux
$\hat {\Bq}_h$ is given by
\begin{align}\label{numerical-flux}
\hat {\Bq}_h={\Bq}_h+\tau_h (u_h-\hat u_h){\Bn} \qquad {\rm on } \
\partial \mathcal T_h,
\end{align}
where the parameter $\tau_h$ is the so-called {\it local
stabilization parameter} which has an important effect on both the
stability of the solution and the accuracy of the HDG scheme. Let
$\tau_{h,T}$ be the value of $\tau_h$ on the element $T$. We always
choose $\tau_{h,T} = \frac{p}{\kappa h_T}$. One of the advantages of
HDG methods is the elimination of both ${\Bq}_h$ and $u_h$ from the
equation, and then we may obtain a formulation in terms of $\hat u_h$ only. Next
we define the discrete solutions of the local problems: For each
function $\lambda\in M_h^p$, $({\bf \mathcal Q}_\lambda, \mathcal
{U}_\lambda)\in {\BV}(T)\times W(T)$ satisfies the following
formulation
\begin{align}
\label{local1} ({\bf i}\kappa\mathcal
{Q}_\lambda,\overline{{\Br}})_T-(\mathcal {U}_\lambda,\overline{{\rm
div}\,{\Br}})_T&=-\langle \lambda,
\overline{{\Br}\cdot {\Bn}}\rangle_{\partial T}, \quad \forall {\Br}\in {\BV}(T),\\
\label{local2} ({\bf i}\kappa \mathcal
{U}_\lambda,\overline{w})_T-(\mathcal {Q}_\lambda,\overline{\nabla
w})_T+\langle\hat {\mathcal {Q}}_\lambda\cdot
{\Bn},\overline{w}\rangle_{\partial T}&=0,\quad  \forall
{w}\in W(T),
\end{align}
where $\hat {\mathcal {Q}}_\lambda\cdot {\Bn}=\mathcal {Q}_\lambda\cdot
{\Bn}+\tau_h(\mathcal {U}_\lambda-\lambda)$.  For $f\in L^2(\Omega)$, $({\bf
\mathcal Q}_f, \mathcal {U}_f)\in {\BV}(T)\times W(T)$ is defined as
follows:
\begin{align}
\label{local3}
({\bf i}\kappa\mathcal {Q}_f,\overline{{\Br}})_T-(\mathcal {U}_f,\overline{{\rm div}\,{\Br}})_T&=0 ,\quad  \forall {\Br}\in {\BV}(T),\\
\label{local4} ({\bf i}\kappa \mathcal
{U}_f,\overline{w})_T-(\mathcal {Q}_f,\overline{\nabla
w})_T+\langle\hat {\mathcal {Q}}_f\cdot
{\Bn},\overline{w}\rangle_{\partial T}&=(f,\overline{w})_T,\quad  \forall {w}\in W(T),
\end{align}
where $\hat {\mathcal {Q}}_f\cdot {\Bn}=\mathcal {Q}_f\cdot
{\Bn}+\tau_h\mathcal {U}_f$. Then $\hat u_h$ is the solution of the following equation
\begin{align}
\label{AS}
a_h(\hat u_h,\mu)=b_h(\mu),\quad \forall  \mu\in M_h^p,
\end{align}
where
\begin{align}
\label{DF}
a_h(\lambda,\mu):=-\langle\hat {\mathcal {Q}}_\lambda,\overline{\mu}\rangle_{\partial\mathcal T_h}+\langle \lambda,\overline{\mu}\rangle_{\partial \Omega},
\end{align}
\begin{align*}
b_h(\mu):=\langle\hat {\mathcal
{Q}}_f,\overline{\mu}\rangle_{\partial\mathcal
T_h}+\langle g,\overline{\mu}\rangle_{\partial \Omega}.
\end{align*}
We focus on designing a multilevel method for the linear algebraic system
(\ref{AS}).

Let $\{\mathcal {T}_{l}\}_{l=0}^L$ be a shape regular family of
nested conforming  triangulations of $\Omega$, which means that $\mathcal
{T}_{0}$ is a quasi-uniform initial mesh and $\mathcal
{T}_{l}$ is obtained by  quasi-uniform
refinement of $\mathcal
{T}_{l-1},l\geq 1$. For simplicity, we denote
by $a_l(\cdot,\cdot)$ the bilinear form $a_{h_l}(\cdot,\cdot)$ on
$M_l$, where $h_l$ is the mesh size of $\Ct_l$, meanwhile we denote by $M_l$ for the $hp$-HDG approximation space
$M^p_{h_l}$ on $\Ct_l$,  the collection of edges of $\Ct_l$ is
denoted by $\mathcal E_l$.  Let $I_l:
M_l\rightarrow M_L$ be the intergrid transform operator, which will
be specified later. Define projections $P_l$, $Q_l$ : $M_L
\rightarrow M_l$ as
$$
a_l(P_l v, w) = a_L(v,  I_lw)   , \quad \langle Q_lv,\overline{w}\rangle_{\partial \Ct_l} = \langle v,
\overline{I_lw}\rangle_{\partial \Ct_L}  , \quad v\in M_L,\, w \in M_l.
$$
The existence and uniqueness solution of problem (\ref{AS})
imply the well-posedness of the above definition. For $0\leq l \leq L$,
define $A_l: M_l \rightarrow M_l$, $F_l\in M_l$ by means of
\begin{align}
\langle A_l v,\overline{w}\rangle_{\partial \Ct_l} = a_l(v,w),\   \langle F_l,\overline{w}\rangle_{\partial \Ct_l} = b_l(w) ,
\quad v,w\in M_l.\label{Ah}
\end{align}
Let $R_l:M_l\rightarrow M_l$ be the smoothing operator on $M_l$
which is chosen as weighted Jacobi or Gauss-Seidel relaxation. In
fact, both weighted Jacobi and Gauss-Seidel relaxation can be
used on the fine grids. Otherwise, we choose GMRES relaxation as a
smoother, and we will give some illustration in Section 4. Now we
 state our multilevel method.

\begin{algorithm}\label{alg}
Given an arbitrarily chosen initial iterate $u^0\in M_L$, we seek
$u^n\in M_L$ as follows:

\smallskip

Let $v_0 = u^{n-1}$. For $l=0,1,\cdots,L$, compute $v_{l+1}$ by

\smallskip

$1)$ When $l=0$, $v_1=v_0+\mu_0I_0(A_0)^{-1} Q_0 (F_L - A_L v_0)$.
For $l=1,\cdots,L$, if $\kappa h_l / p \geq \alpha$, perform $m_1$
steps {\rm GMRES} smoothing for the correction problem $A_l w_l =
Q_l(F_L - A_L v_l)$, and set
\[
v_{l+1} = v_l + \mu_l I_lw_l,
\]
else perform $m_2$ steps of weighted Jacobi relaxation $R^{J}_{l}$
or Gauss-Seidel relaxation $R^{GS}_{l}$,
\[
v_{l+1} = v_l + \mu_l I_l(R_{l})^{m_2}Q_l(F_L - A_L v_l),
\]
where $R_{l}=R^{J}_{l}$ or $R^{GS}_{l}$. We will always choose the
parameters $\alpha$ and $\{\mu_l\}^{L}_{l=0}$ as $0.5$ in this
paper.

\smallskip

$2)$ For $l=L,\cdots,1$, if $\kappa h_l / p < \alpha$, perform $m_3$
steps of $R_{l} = R^J_{l}$ or $R^{GS}_{l}$ to obtain $v_{2L+2-l}$,
\[
v_{2L+2-l} = v_{2L+1-l} + \mu_l I_l(R_{l})^{m_3}  Q_l(F_L -
A_Lv_{2L+1-l}) ; \label{smooth-fem-standard}
\]
else perform $m_4$ steps of {\rm GMRES} smoothing for the correction
problem $A_l w_l = Q_l(F_L - A_L v_{2L+1-l})$, and set
\[
v_{2L+2-l} = v_{2L+1-l} + \mu_l I_lw_l. \label{smooth-cip-gmres}
\]
When $l=0$, $v_{2L+2} = v_{2L+1}+  \mu_0I_0(A_0)^{-1} Q_0 (F_L - A_L
v_{2L+1})$.

\smallskip

$3)$ Set $u^n = v_{2L+2} $.
\end{algorithm}

At the end of this section, we give the definition of the  transfer
operator $I_l: M_l\rightarrow M_L,0\leq l\leq L-1$. Note that $I_L: M_L\rightarrow M_L$ is the identity operator. Denote
$W_l^c:=\{v\in C(\Omega)\ | \ v|_T\in W(T),\ \forall T\in
\mathcal T_l\}$. We first define an auxiliary operator
$I_l^W: M_l\rightarrow W_l^c,0\leq l\leq L-1$.

Case 1: $p=1$. Let
\begin{displaymath}
I_l^W\lambda(z_n)= \frac{\sum_{e\subset \mathcal
E_l(z_n)}\lambda_e(z_n)}{|\mathcal E_l(z_n)|}\quad  \forall z_n\in
\mathcal V_l,
\end{displaymath}
where $\mathcal V_l$ is the collection of the vertices of $\Ct_l$. For any $z_n\in
\mathcal V_l$,
$\mathcal E_l(z_n)$ is the collection of edges in $\mathcal E_l$ which contain $z_n$, while $|\mathcal E_l(z_n)|$ is the number of edges  in $\mathcal E_l(z_n)$.

Case 2: $p=2$. Let
\begin{displaymath}
I_l^W\lambda(z_n)=\left\{\begin{array}{ll}
\frac{\sum_{e\subset \mathcal E_l(z_n)}\lambda_e(z_n)}{|\mathcal E_l(z_n)|}\ & z_n\in \mathcal V_l;\\
\lambda (z_n)& z_n \in \mathcal N_l\backslash  \mathcal V_l,
\end{array} \right.\\
\end{displaymath}
where $\mathcal N_l$ is the degree of freedom of the space $W_l^c$.

Case 3: $p\geq 3$. Let
\begin{displaymath}
I_l^W\lambda(z_n)=\left\{\begin{array}{ll}
\frac{\sum_{e\subset \mathcal E_l(z_n)}\lambda_e(z_n)}{|\mathcal E_l(z_n)|}\ & z_n\in \mathcal V_l;\\
\lambda (z_n)& z_n \in \mathcal N_l\backslash  (\mathcal V_l\cup \mathcal N_l^0);\\
U_{\lambda} (z_n)& z_n \in \mathcal N_l^0,
\end{array} \right.\\
\end{displaymath}
where $\mathcal N_l^0$ is the degree of freedom in the interior of
every element $T\in\Ct_l$. For the $p\geq 3$ case, $\mathcal N_l^0$
is not an empty set, but the space $M_l$ dose not provide any
information for the  degree of freedom  in the interior of $T$.
Hence we use the solution of the local problem
(\ref{local1}-\ref{local2}) to define it. Note that this procedure only
involves the computation of the local problems and can be parallel
implemented.

With the help of the above operator $I_l^W$, we may define $I_l:
M_l\rightarrow M_L,0\leq l\leq L-1$ as follows:
\begin{align}
\label{trans}
I_l\mu|_{\mathcal E_L}:=I_l^W\mu|_{\mathcal E_L}.
\end{align}
Throughout this paper, we use notations $A\lesssim B$ and $A\gtrsim
B$ for the inequalities $A\leq CB$ and $A\geq CB$, where $C$ is a
positive number independent of the mesh sizes and mesh levels.

\section{The stability of the intergrid transfer operator}
The design of the stable intergrid transfer operator is critical for the success of the nonnested multilevel method.
The failure of certain `obvious' transfer operators in \cite{J-S} is due to the fact that
the energy error increases by using these operators. In the following, we will analyze the stability estimate of our intergrid transfer operator in the energy norm. From the numerical results, we may find that this intergrid transfer operator works well in our multilevel method for the Helmholtz problem. Consider the Possion equation:
\begin{align}
\label{4Lap-2}
-\triangle U=f\qquad {\rm {in} }\ \Omega,\\
\label{4Lap-2-b}
U=g\qquad {\rm {on} }\ \partial \Omega,
\end{align}
where $f\in L^2(\Omega)$ and $g\in L^2(\partial\Omega)$. Clearly,
(\ref{4Lap-2}-\ref{4Lap-2-b}) can be rewritten in a mixed form as
finding $({\BQ},U)$ such that
\begin{align}
\label{4Lap-1-1}
\BQ =-\nabla U \qquad {\rm {in} }\ \Omega,\\
\label{4Lap-1-2}
{\rm div}\ {\BQ} =f \qquad {\rm {in} }\ \Omega,\\
\label{4Lap-1-3}
U=g\qquad {\rm {on} }\ \partial \Omega.
\end{align}

The corresponding HDG method yields finite element
approximations $({\BQ}_h, U_h,\hat U_h)\in {\BV}_h^p\times
W_h^p\times M_h^{p,0}$ which satisfy
\begin{align}
\label{4P1}
({\BQ}_h,{\Br})_{\mathcal T_h}-(U_h, {{\rm div \, {\Br}}})_{\mathcal T_h}+\langle \hat U_h, {{\Br}\cdot {\Bn}}\rangle_{\partial \mathcal T_h}&=-\langle g, {{\Br}\cdot {\Bn}}\rangle_{\partial \Omega},\\
\label{4P2}
-({\BQ}_h,{\nabla w})_{\mathcal T_h}+\langle \hat {\BQ}_h\cdot {\Bn} , w\rangle_{\partial \mathcal T_h}&=(f, w)_{\mathcal T_h} ,\\
\label{4P3}
\langle \hat{\BQ}_h\cdot {\Bn},
\mu\rangle_{\partial \mathcal T_h }&=0,
\end{align}
for all ${\Br}\in {\BV}_h^p$, $w\in W_h^p$, and $\mu\in M_h^{p,0}$, where
$$ M_h^{p,0}:=\{ \mu\in M_h^p\ | \ \mu|_{\partial \Omega}=0\},$$
and
\begin{displaymath}
\hat {\BQ}_h\cdot {\Bn}=\left\{\begin{array}{ll}
{\BQ}_h\cdot {\Bn}+\tau_h (U_h-\hat U_h) & e\in \partial \mathcal T_h \backslash \partial \Omega; \\
{\BQ}_h\cdot {\Bn}+\tau_h (U_h-g)  & e\in \partial \mathcal T_h \cap \partial \Omega .
\end{array} \right.
\end{displaymath}
For fixed $p$, we choose $\tau_h=O(h^{-1})$ for the  Possion equation.

For any $m\in L^2(\partial T)$ and $f \in L^2(T)$, define $({\BQ}_m,{U}_m)$, $({\BQ}_f,{U}_f)\in V(T)\times W(T)$ as follows:
\begin{align}
\label{4localm1}
({\BQ}_m, \Bv)_T-(U_m, {\rm div}\ {\Bv})_T=-\langle m, {\Bv}\cdot {\Bn}\rangle_{\partial T},\\
\label{4localm2}
-({\BQ}_m,\nabla w)_T+\langle  \hat{{\BQ}}_m\cdot {\Bn},w\rangle_{\partial T}=0,
\end{align}
\begin{align}
\label{4localf1}
({\BQ}_f, \Bv)_T-(U_f, {\rm div}\ {\Bv})_T=0,\\
\label{4localf2}
-({\BQ}_f,\nabla w)_T+\langle  \hat{{\BQ}}_f\cdot {\Bn},w\rangle_{\partial T}=(f,w)_T,
\end{align}
for all $({\Bv},w)\in{\BV}(T)\times W(T)$,
where
$$\hat {{\BQ}}_m\cdot {\Bn}={\BQ}_m\cdot
{\Bn}+\tau_{h}(U_m-m),$$
$$\hat {{\BQ}}_f\cdot {\Bn}={\BQ}_f\cdot
{\Bn}+\tau_{h}U_f.$$
It is shown in Theorem 2.1 in \cite{Cockburn} that $\hat U_h\in M_h^{p,0}$ is the solution of the following equation
$$\hat a_h(\hat U_h,\mu)=\hat b_h(\mu),\quad \forall \  \mu\in M_h^{p,0},$$
where
\begin{align}
\label{bilinear-E}
\hat a_h(\lambda,\mu)&:=({\BQ}_{\lambda},{\BQ}_{\mu})_{\Ct_h}
+\tau_h\langle U_{\lambda}-\lambda, U_{\mu}-\mu\rangle_{\partial \Ct_h},\\
\hat b_h(\mu)&:=(f,U_\mu)_{\Ct_h}+\langle g, {\hat {\BQ}_\mu \cdot {\Bn}}\rangle_{\partial \Omega}.
\end{align}

Let the space $W_{h_l}^p$ and $M_{h_l}^{p,0}$ on $\Ct_l$ be denoted by $W_l$ and $M_l^0$ repectively,  while the local stabilization parameter is denoted by $\tau_l$, which is of order $O(h_l^{-1})$. Define the  average operator
$\tilde I_l^W:W_l\rightarrow W_l^c,0\leq l\leq L-1$ as follows:

\begin{displaymath}
\tilde I_l^W u(z_n)=\left\{\begin{array}{ll}
\frac{\sum_{T\subset \mathcal T_l(z_n)}u_T(z_n)}{|\mathcal T_l(z_n)|}& z_n \in \mathcal N_l\backslash   \mathcal N_l^0;\\
u(z_n)& z_n \in \mathcal N_l^0,
\end{array} \right.\\
\end{displaymath}
where $\mathcal T_l(z_n)$ is the collection of elements in $\Ct_l$ which contain $z_n$.
$|\mathcal T_l(z_n)|$ is the number of elements in
$\mathcal T_l(z_n)$. Note that the average operator $\tilde I_l^W$ coincides with $I_{os}$ in \cite{Erik}, we refer to \cite{Erik} for the properties of $\tilde I_l^W$.

\begin{lemma}
  \label{lem2.1}
For all $\mu\in M_l^0$, $T\in \Ct_l$, let $U_\mu$ be the solution of (\ref{4localm1}-\ref{4localm2}). Then
\begin{align}
\label{element}
|\tilde I_l^WU_\mu-I_l^W\mu|_{1,T}\lesssim h_l^{-1/2}\|U_\mu-\mu\|_{0,\partial \Ct_l(\Omega_l(T))},
\end{align}
where $\Omega_l(T):=\{T'\in \Ct_l, T'\cap T\not=\emptyset\}$, $\partial \Ct_l(\Omega_l(T)):=\bigcup_{T'\in \Omega_l(T)}\partial T'$. Furthermore,
summing up for all $T\in\Ct_l$, we have
\begin{align}
\label{imp}
|\tilde I_l^WU_\mu-I_l^W\mu|_{1,\Omega}\lesssim h_l^{-1/2}\|U_\mu-\mu\|_{0,\partial \Ct_l}.
\end{align}
\end{lemma}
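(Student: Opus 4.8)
The plan is to estimate the continuous piecewise polynomial $w:=\tilde I_l^WU_\mu-I_l^W\mu\in W_l^c$ nodewise on each $T$. Since $w|_T\in\mathcal P_p(T)$, a standard inverse inequality together with the equivalence of norms on the finite-dimensional space $\mathcal P_p(T)$ (obtained by scaling from a reference element, using shape-regularity) gives
\begin{align*}
|w|_{1,T}^2\lesssim h_l^{-2}\|w\|_{0,T}^2\lesssim h_l^{d-2}\sum_{z_n\in\overline{T}\cap\mathcal N_l}|w(z_n)|^2 ,
\end{align*}
so it suffices to control the nodal values $w(z_n)$. I would first observe that the interior degrees of freedom contribute nothing: for $z_n\in\mathcal N_l^0$ both operators return the local value $U_\mu(z_n)$ (by the $p\ge 3$ branch of $I_l^W$, and since $\tilde I_l^W$ leaves interior nodes untouched), hence $w(z_n)=0$ there.

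The heart of the matter is to rewrite $w(z_n)$ at the remaining boundary nodes as a bounded linear combination of the pointwise jumps $(U_\mu|_{T'}-\mu_e)(z_n)$ with $e\subset\partial T'$, $z_n\in e$, since these are exactly the pointwise values of $U_\mu-\mu$ on $\partial\Ct_l$. For an edge node $z_n$ that is not a vertex, $I_l^W\mu(z_n)=\mu(z_n)$ is single-valued while $\tilde I_l^WU_\mu(z_n)$ is the average of $U_\mu|_{T'}(z_n)$ over the elements sharing that edge, so $w(z_n)$ is literally an average of the jumps $U_\mu|_{T'}(z_n)-\mu(z_n)$. The delicate case is a vertex $z_n\in\mathcal V_l$, where $\tilde I_l^W$ averages $U_\mu$ over the elements meeting at $z_n$ whereas $I_l^W$ averages $\mu$ over the edges (faces) meeting at $z_n$; the two averaging patterns do not match. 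I would exploit that the element–edge incidence graph at $z_n$ (an element and one of its faces, both containing $z_n$, being adjacent) is connected by conformity and shape-regularity, so that the balanced defect $w(z_n)=\tfrac{1}{|\mathcal T_l(z_n)|}\sum_{T'}U_\mu|_{T'}(z_n)-\tfrac{1}{|\mathcal E_l(z_n)|}\sum_{e}\mu_e(z_n)$ can be written as $\sum_{(T',e)}\alpha_{T',e}(U_\mu|_{T'}-\mu_e)(z_n)$ by solving a balanced flow problem on that graph (equal total supply and demand, both $1$); boundary vertices are treated the same way using $\mu|_{\partial\Omega}=0$. Shape-regularity bounds the valences, hence both the number of terms and the coefficients $\alpha_{T',e}$ are $O(1)$, giving $|w(z_n)|^2\lesssim\sum|(U_\mu|_{T'}-\mu_e)(z_n)|^2$.

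Combining the two cases and summing over the $O(1)$ nodes of $T$ yields $|w|_{1,T}^2\lesssim h_l^{d-2}\sum|(U_\mu|_{T'}-\mu_e)(z_n)|^2$, the sum ranging over nodes on edges of elements in $\Omega_l(T)$. I would then run the scaling argument in reverse on each edge $e$ of diameter $\sim h_l$, namely $\sum_{z_n\in e}|v(z_n)|^2\sim h_l^{-(d-1)}\|v\|_{0,e}^2$ for $v\in\mathcal P_p(e)$, to convert the nodal jump values back into the $L^2$ norm of $U_\mu-\mu$ over $\partial\Ct_l(\Omega_l(T))$. This produces the factor $h_l^{d-2}\cdot h_l^{-(d-1)}=h_l^{-1}$ and hence the local estimate (\ref{element}), $|w|_{1,T}\lesssim h_l^{-1/2}\|U_\mu-\mu\|_{0,\partial\Ct_l(\Omega_l(T))}$. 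Finally, summing over $T\in\Ct_l$ and using that each element (and hence each edge) lies in the patch $\Omega_l(T')$ of only boundedly many $T'$ (finite overlap from shape-regularity) gives $\sum_T\|U_\mu-\mu\|_{0,\partial\Ct_l(\Omega_l(T))}^2\lesssim\|U_\mu-\mu\|_{0,\partial\Ct_l}^2$, which is (\ref{imp}).

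The step I expect to be the main obstacle is the vertex case: reconciling the element-based average built into $\tilde I_l^W$ with the edge-based average built into $I_l^W$, and showing their difference is controlled purely by the jumps $U_\mu-\mu$ with $O(1)$ constants. Once the connectivity of the incidence structure at a vertex is used to produce the flow decomposition, the remaining ingredients are the two routine scaling inequalities and the finite-overlap summation.
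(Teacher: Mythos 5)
Your proposal is correct and follows essentially the same route as the paper's proof: a nodewise estimate of $\tilde I_l^WU_\mu-I_l^W\mu$ on each element, exact cancellation at the interior degrees of freedom forced by the $p\geq 3$ definition of $I_l^W$ via $U_\lambda$, rewriting the remaining nodal values as bounded combinations of pointwise values of $U_\mu-\mu$ on element boundaries, an edgewise scaling/inverse estimate to recover the $L^2$ norm with the factor $h_l^{-1/2}$, and a finite-overlap summation for (\ref{imp}). The only deviation is at the vertices, where the paper simply pairs each element $T_i$ with an incident edge $e_i$ one-to-one (using that $|\Ct_l(z_n)|=|\mathcal E_l(z_n)|$ at an interior vertex of a conforming 2D triangulation, with $\mu\in M_l^0$ handling boundary vertices), so your balanced-flow decomposition is a mildly more general way of doing the same step rather than a different idea.
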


\begin{proof}
For all $T_1\in\Ct_l$, if $\partial T_1\cap \partial \Omega=\emptyset$, suppose $\{a_i,i=1,\ldots, N_p\}$ are the degrees of freedom in $W(T_1)$.
It is obvious that
\begin{align}
|\tilde I_l^WU_\mu-I_l^W\mu|_{1,T_1}^2\lesssim \sum_{i=1}^{N_p}\big((\tilde I_l^WU_\mu-I_l^W\mu)(a_i)\big)^2.
\end{align}
According to the definition of $\tilde I_l^W$ and $I_l^W$,  we have (see Figure \ref{fig4.1})

\begin{figure}[!htbp]
\centering
   \includegraphics[width=2.4in]{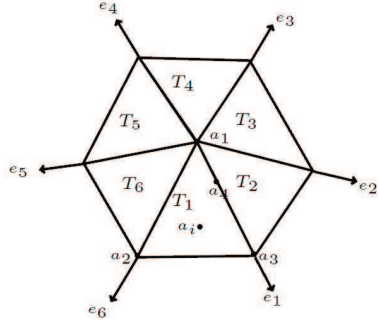}
   \caption{\small An illustration of the triangles containing $a_1$.}
   \label{fig4.1}
\end{figure}

\begin{align*}
(\tilde I_l^WU_\mu-I_l^W\mu)(a_1)&=\frac{U_{\mu}^{T_1}(a_1)+
\ldots +U_{\mu}^{T_{n_{a_1}}}(a_1)}{|\Ct_l(a_1)|}-\frac{{\mu}^{e_1}(a_1)+
\ldots +{\mu}^{e_{n_{a_1}}}(a_1)}{|\Ct_l(a_1)|}\\
&=\frac{(U_{\mu}^{T_1}(a_1)-{\mu}^{e_1}(a_1))+
\ldots +(U_{\mu}^{T_{n_{a_1}}}(a_1)-{\mu}^{e_{n_{a_1}}}(a_1))}{|\Ct_l(a_1)|},
\end{align*}
where $U_{\mu}^{T_i}$ and ${\mu}^{e_i},\ i=1,\ldots,{n_{a_1}}$, are the
values of $U_{\mu}$ in $T_i$ and ${\mu}$ in $e_i$ respectively. ${n_{a_1}}$ is the number of elements which share the vertex $a_1$, specially for the case in Figure \ref{fig4.1}, ${n_{a_1}}=6$.

Since $\Ct_l$ is a sharp regular mesh,
\begin{align*}
\big((\tilde I_l^WU_\mu-I_l^W\mu)(a_1)\big)^2&\lesssim \big((U_{\mu}^{T_1}(a_1)-{\mu}^{e_1}(a_1))\big)^2+
\ldots +\big((U_{\mu}^{T_{n_{a_1}}}(a_1)-{\mu}^{e_{n_{a_1}}}(a_1))\big)^2\\
&\lesssim h_l^{-1}\|U_\mu-\mu\|_{0,\partial \Ct_l(\Omega_l(a_1))}^2,
\end{align*}
where $\Omega_l(a_1):=\{T'\in \Ct_l, T'\ {\rm {contains }}\ a_1\}$, $\partial \Ct_l(\Omega_l(a_1)):=\bigcup_{T'\in \Omega_l(a_1)}\partial T'$.
Similarly we can get
\begin{align*}
\big((\tilde I_l^WU_\mu-I_l^W\mu)(a_2)\big)^2
\lesssim h_l^{-1}\|U_\mu-\mu\|_{0,\partial \Ct_l(\Omega_l(a_2))}^2,
\end{align*}
and
\begin{align*}
\big((\tilde I_l^WU_\mu-I_l^W\mu)(a_3)\big)^2
\lesssim h_l^{-1}\|U_\mu-\mu\|_{0,\partial \Ct_l(\Omega_l(a_3))}^2.
\end{align*}
Then for the case $p=1$, (\ref{element}) is proved.
Suppose $a_4$ is the degree of freedom of $W(T_1)$ in the edge $[a_1, a_3]$, by the definition of $\tilde I_l^W$ and $I_l^W$, we have
\begin{align*}
(\tilde I_l^WU_\mu-I_l^W\mu)(a_4)=
\frac{(U_{\mu}^{T_1}(a_4)-{\mu}(a_4)) +(U_{\mu}^{T_2}(a_4)-{\mu}(a_4))}{2},
\end{align*}
which means
\begin{align*}
\big((\tilde I_l^WU_\mu-I_l^W\mu)(a_4)\big)^2
\lesssim h_l^{-1}\|U_\mu-\mu\|_{0,\partial \Ct_l(\Omega_l(a_4))}^2.
\end{align*}
Similarly we can obtain the estimates for the degrees of freedom of $W(T_1)$ in the edges $[a_1, a_2]$ and $[a_2, a_3]$.
Then for the case $p=2$, (\ref{element}) is proved.
Suppose $a_i$ is the degree of freedom of $W(T_1)$ in the interior of $T_1$, since
$\tilde I_l^WU_\mu(a_i)=U_\mu(a_i)$, according to the definition of $I_l^W\mu$, we can get
$$\tilde I_l^WU_\mu(a_i)=I_l^W\mu(a_i).$$
Hence for the case $p\geq 3$, (\ref{element}) is proved.

If $\partial T_1\cap \partial \Omega\not=\emptyset$, due to the fact that $\mu\in M_l^0$, (\ref{element}) can be derived similarly.
\end{proof}

\begin{remark}
From the proof of
Lemma \ref{lem2.1}, we may see  why we should define the  degree of freedom in the interior of $T_1$ by $U_\lambda$ in  the  case $p\geq 3$. Actually, this definition may ensure (\ref{imp}), which is important in our analysis for the stability estimate of $I_l$.
\end{remark}

\begin{lemma}
  \label{lem2.2}
For all $T\in\Ct_l$, $m\in L^2(\partial T)$, let $({\BQ}_m,U_m)$ be the solution of the local problem (\ref{4localm1}-\ref{4localm2}). Then
\begin{align}
\|{\BQ}_m+\nabla U_m\|_{0,T}\lesssim h_l^{-1/2}\|U_m-m\|_{0,\partial T}.
\end{align}
\end{lemma}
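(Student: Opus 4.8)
The plan is to reduce the claim to a one-line energy estimate by deriving a local variational identity for the quantity $\Bp := \BQ_m + \nabla U_m$ directly from the first local equation~(\ref{4localm1}), and then testing that identity against $\Bp$ itself. First I would integrate by parts in the term $(U_m,{\rm div}\,\Bv)_T$ of~(\ref{4localm1}), using the elementwise Green formula $(U_m,{\rm div}\,\Bv)_T = -(\nabla U_m,\Bv)_T + \langle U_m,\Bv\cdot\Bn\rangle_{\partial T}$. Substituting this back into~(\ref{4localm1}) and rearranging, the volume terms recombine into $\Bp$ and the boundary terms into $U_m-m$, giving the identity
$$(\BQ_m + \nabla U_m,\,\Bv)_T = \langle U_m - m,\,\Bv\cdot\Bn\rangle_{\partial T}, \qquad \forall\,\Bv\in\BV(T).$$
Note that the second local equation~(\ref{4localm2}) plays no role in this estimate.

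The key observation is that $\Bp$ itself is an admissible test function, i.e.\ $\Bp\in\BV(T)=(\mathcal P_p(T))^d$: indeed $\BQ_m\in\BV(T)$ by construction, while $\nabla U_m\in(\mathcal P_{p-1}(T))^d\subset\BV(T)$ since $U_m\in W(T)=\mathcal P_p(T)$. Taking $\Bv=\Bp$ in the identity above and applying the Cauchy--Schwarz inequality on $\partial T$ (together with the pointwise bound $|\Bp\cdot\Bn|\le|\Bp|$ coming from $\Bn$ being a unit normal) then yields
$$\|\Bp\|_{0,T}^2 = \langle U_m - m,\,\Bp\cdot\Bn\rangle_{\partial T} \le \|U_m - m\|_{0,\partial T}\,\|\Bp\|_{0,\partial T}.$$

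To finish, I would invoke the scaled discrete trace (inverse) inequality for polynomials on a shape-regular element, $\|\Bp\|_{0,\partial T}\lesssim h_l^{-1/2}\|\Bp\|_{0,T}$, whose constant depends only on the fixed degree $p$ and the shape regularity of $\Ct_l$. Inserting this into the previous display and dividing by $\|\Bp\|_{0,T}$ (the estimate being trivial when $\Bp=0$) gives
$$\|\BQ_m + \nabla U_m\|_{0,T} = \|\Bp\|_{0,T}\lesssim h_l^{-1/2}\|U_m - m\|_{0,\partial T},$$
which is exactly the assertion. There is no serious obstacle here; the argument is essentially the standard hybridized-mixed energy computation once the integration by parts is carried out. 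The only points requiring care are the degree count ensuring $\nabla U_m\in\BV(T)$, so that $\Bp$ may legitimately be used as a test function, and the correct $h_l^{-1/2}$ power in the trace inequality, both of which are routine for shape-regular meshes with fixed polynomial degree.
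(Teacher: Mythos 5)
Your proposal is correct and follows essentially the same route as the paper: the same identity $({\BQ}_m+\nabla U_m,\Bv)_T=\langle U_m-m,\Bv\cdot\Bn\rangle_{\partial T}$ obtained by Green's formula from (\ref{4localm1}), the same choice $\Bv={\BQ}_m+\nabla U_m$, and the same scaled trace inequality for polynomials (the paper cites Lemma 3.2 of \cite{Wuhp} for this step). Your added remarks --- the degree count showing $\nabla U_m\in{\BV}(T)$ and the observation that (\ref{4localm2}) is not needed --- are accurate but left implicit in the paper's proof.
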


\begin{proof}
Applying the Green's formula and using (\ref{4localm1}), we have
$$({\BQ}_m+\nabla U_m,{\Bv})_T=\langle U_m-m, {\Bv}\cdot {\Bn}\rangle_{\partial T}, \quad \forall {\Bv}\in V(T).$$
Taking $v={\BQ}_m+\nabla U_m$, and using Lemma 3.2 in  \cite{Wuhp}, we get
\begin{align*}
\|{\BQ}_m+\nabla U_m\|_{0,T}^2&\leq \|U_m-m\|_{0,\partial T}\|{\BQ}_m+\nabla U_m\|_{0,\partial T}\\
&\lesssim h_l^{-1/2}\|U_m-m\|_{0,\partial T}\|{\BQ}_m+\nabla U_m\|_{0,T},
\end{align*}
Eliminating $\|{\BQ}_m+\nabla U_m\|_{0,T}$ from both side of the
above inequality concludes the proof.
\end{proof}

\begin{lemma}
  \label{allp}
For all $\mu\in M_l^0$, we have
  $$|I_l^W\mu|_{1,\Omega}^2\lesssim \hat a_l(\mu,\mu).$$

\end{lemma}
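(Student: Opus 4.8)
The plan is to transfer everything onto the auxiliary continuous average $\tilde I_l^W U_\mu$ and then invoke the two preceding lemmas. Writing out the definition of the bilinear form gives $\hat a_l(\mu,\mu)=\|{\BQ}_\mu\|_{0,\Omega}^2+\tau_l\|U_\mu-\mu\|_{0,\partial\Ct_l}^2$, and since $\tau_l=O(h_l^{-1})$ we have $h_l^{-1}\|U_\mu-\mu\|_{0,\partial\Ct_l}^2\lesssim\hat a_l(\mu,\mu)$; hence it suffices to bound $|I_l^W\mu|_{1,\Omega}^2$ by $\|{\BQ}_\mu\|_{0,\Omega}^2$ and $h_l^{-1}\|U_\mu-\mu\|_{0,\partial\Ct_l}^2$. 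The triangle inequality in the $H^1$ seminorm splits the target as
\[
|I_l^W\mu|_{1,\Omega}\le |I_l^W\mu-\tilde I_l^W U_\mu|_{1,\Omega}+|\tilde I_l^W U_\mu|_{1,\Omega}.
\]
The first term is exactly the content of Lemma \ref{lem2.1}, which bounds it by $h_l^{-1/2}\|U_\mu-\mu\|_{0,\partial\Ct_l}$, so squaring yields a contribution $\lesssim\hat a_l(\mu,\mu)$.

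It remains to control $|\tilde I_l^W U_\mu|_{1,\Omega}$. Since $\tilde I_l^W U_\mu\in W_l^c\subset H^1(\Omega)$, its global seminorm coincides with the broken one, so I would split once more against the (discontinuous) local solution $U_\mu$:
\[
|\tilde I_l^W U_\mu|_{1,\Omega}\le\Big(\sum_{T\in\Ct_l}|U_\mu-\tilde I_l^W U_\mu|_{1,T}^2\Big)^{1/2}+\Big(\sum_{T\in\Ct_l}|U_\mu|_{1,T}^2\Big)^{1/2}.
\]
For the broken seminorm of $U_\mu$ I would write $\nabla U_\mu=-{\BQ}_\mu+({\BQ}_\mu+\nabla U_\mu)$ and apply Lemma \ref{lem2.2}, giving $\|\nabla U_\mu\|_{0,T}\lesssim\|{\BQ}_\mu\|_{0,T}+h_l^{-1/2}\|U_\mu-\mu\|_{0,\partial T}$; summing over $T\in\Ct_l$ bounds this term by $\hat a_l(\mu,\mu)$. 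For the averaging defect $\sum_{T}|U_\mu-\tilde I_l^W U_\mu|_{1,T}^2$ I would use the stability property of the averaging operator $I_{os}=\tilde I_l^W$ recorded in \cite{Erik}, namely that it is controlled by the scaled jumps $\sum_{e\in\mathcal E_l^0}h_e^{-1}\|[U_\mu]\|_{0,e}^2$ of $U_\mu$ across the interior edges.

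The conceptual crux --- and the step I would be most careful about --- is relating those interelement jumps to the quantity $U_\mu-\mu$ that appears in $\hat a_l$. Here I would use that the numerical trace $\mu\in M_l^0$ is single valued on each interior edge $e=\partial T^+\cap\partial T^-$, so that $[U_\mu]_e=(U_\mu^+-\mu)-(U_\mu^--\mu)$ and therefore $\|[U_\mu]\|_{0,e}\lesssim\|U_\mu^+-\mu\|_{0,e}+\|U_\mu^--\mu\|_{0,e}$. Summing the scaled jumps then gives $\sum_{e}h_e^{-1}\|[U_\mu]\|_{0,e}^2\lesssim h_l^{-1}\|U_\mu-\mu\|_{0,\partial\Ct_l}^2\lesssim\hat a_l(\mu,\mu)$, the boundary edges being handled by $\mu|_{\partial\Omega}=0$ exactly as in Lemma \ref{lem2.1}. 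Combining the three contributions and squaring the triangle inequality closes the estimate. The only genuinely nontrivial ingredient is the cited averaging stability bound from \cite{Erik}; everything else reduces to the triangle inequality, the two preceding lemmas, and the single-valuedness of $\mu$.
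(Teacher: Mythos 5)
Your proof is correct and follows essentially the same route as the paper's: triangle inequality against $\tilde I_l^W U_\mu$, Lemma \ref{lem2.1} for the defect term, the averaging-operator estimate from \cite{Erik} with the interior jumps $\llbracket U_\mu\rrbracket$ bounded through the single-valuedness of $\mu$ by $\|U_\mu-\mu\|_{0,\partial \Ct_l}$, and Lemma \ref{lem2.2} to reduce the broken seminorm of $U_\mu$ to $\|{\BQ}_\mu\|_{0,\Omega}$ plus the stabilization term. The only cosmetic difference is that you unfold the cited averaging property into a defect bound plus another triangle inequality, whereas the paper quotes the combined estimate $|\tilde I_l^W U_\mu|_{1,\Omega}\lesssim |U_\mu|_{1,\Omega_l}+h_l^{-1/2}\|\llbracket U_\mu\rrbracket\|_{0,\mathcal E_l^0}$ directly.
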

\begin{proof}
By the triangle inequality and Lemma \ref{lem2.1}, we obtain
\begin{align*}
|I_l^W\mu|_{1,\Omega}&\leq |I_l^W\mu-\tilde I_l^WU_\mu|_{1,\Omega}+|\tilde I_l^WU_\mu|_{1,\Omega},\\
&\lesssim h_l^{-1/2}\|U_\mu-\mu\|_{0,\partial \Ct_l}+|\tilde I_l^WU_\mu|_{1,\Omega}.
\end{align*}
The property of the operator $\tilde I_l^W$ (see Remark 3.2 of \cite{Erik}) implies
\begin{align*}
|\tilde I_l^WU_\mu|_{1,\Omega}&\lesssim |U_\mu|_{1,\Omega_l}+h_l^{-1/2}\|\llbracket U_\mu\rrbracket\|_{0,\mathcal E_l^0}\\
&\lesssim |U_\mu|_{1,\Omega_l}+h_l^{-1/2}\|U_\mu-\mu\|_{0,\partial \Ct_l},
\end{align*}
where $|v|_{1,\Omega_l}^2:=\sum_{T\in \Ct_l}|v|_{1,T}^2, \ \forall v\in W_l$.
Then using Lemma \ref{lem2.2}, we have
\begin{align*}
|U_\mu|_{1,\Omega_l}\lesssim \|{\BQ}_\mu\|_{0,\Omega}+h_l^{-1/2}\|U_\mu-\mu\|_{0,\partial \Ct_l}.
\end{align*}
Hence
\begin{align*}
|I_l^W\mu|_{1,\Omega}^2\lesssim \|{\BQ}_\mu\|_{0,\Omega}^2+h_l^{-1}\|U_\mu-\mu\|_{0,\partial \Ct_l}^2\lesssim \hat a_l(\mu,\mu).
\end{align*}
\end{proof}

Next, we prove the stability estimate of the intergrid transfer operator  for the case $p=1$.

\begin{lemma}
  \label{lem2.3}
For the case $p=1$,  let $({\BQ}_{I_l\mu},U_{I_l\mu})\in {\BV}(K)\times W(K)$ be the solution of the local problem (\ref{4localm1}-\ref{4localm2}) for any $K\in \Ct_L$, $I_l\mu\in L^2(\partial K)$, where $\mu\in M_l$. Then
\begin{align}
{\BQ}_{I_l\mu}=-\nabla I_l^W\mu,\qquad U_{I_l\mu}=I_l^W\mu.
\end{align}
\end{lemma}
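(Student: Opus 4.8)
The plan is to exhibit an explicit candidate for the local solution and then invoke uniqueness. Write $\phi:=I_l^W\mu$. The starting observation is that for $p=1$ the auxiliary operator $I_l^W$ maps into $W_l^c$, the space of continuous piecewise-linear functions on $\Ct_l$. Since the triangulations are nested, every $K\in\Ct_L$ is contained in a single element $T\in\Ct_l$, so $\phi|_K$ is the restriction of one affine polynomial; hence $\phi|_K\in\mathcal P_1(K)=W(K)$ and $\nabla\phi|_K$ is a constant vector, which lies in $(\mathcal P_0(K))^d\subset{\BV}(K)$. Thus the candidate pair $(-\nabla\phi,\phi)$ belongs to the correct local space ${\BV}(K)\times W(K)$, which is the first thing that must be checked.

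Next I would verify that this candidate satisfies (\ref{4localm1}-\ref{4localm2}) with boundary datum $m=I_l\mu|_{\partial K}$. By the definition (\ref{trans}) of $I_l$, the datum on $\partial K\subset\mathcal E_L$ is exactly the trace $\phi|_{\partial K}$; since $U=\phi$ is continuous and agrees with this trace, the difference $U_m-m=\phi|_{\partial K}-\phi|_{\partial K}=0$ on $\partial K$, so the stabilization term in the numerical flux drops out and $\hat{\BQ}\cdot{\Bn}={\BQ}\cdot{\Bn}=-\nabla\phi\cdot{\Bn}$. For (\ref{4localm1}), integration by parts gives $(-\nabla\phi,{\Bv})_K-(\phi,{\rm div}\,{\Bv})_K=-\langle\phi,{\Bv}\cdot{\Bn}\rangle_{\partial K}=-\langle m,{\Bv}\cdot{\Bn}\rangle_{\partial K}$, as required. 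For (\ref{4localm2}), with $\hat{\BQ}\cdot{\Bn}=-\nabla\phi\cdot{\Bn}$, Green's formula yields $(\nabla\phi,\nabla w)_K-\langle\nabla\phi\cdot{\Bn},w\rangle_{\partial K}=-(\triangle\phi,w)_K=0$, because $\phi$ is affine on $K$, matching the zero right-hand side.

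Finally, since the local HDG problem (\ref{4localm1}-\ref{4localm2}) is well posed and therefore admits a unique solution, the verified candidate must be the solution, giving ${\BQ}_{I_l\mu}=-\nabla I_l^W\mu$ and $U_{I_l\mu}=I_l^W\mu$ on every $K\in\Ct_L$.

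The only genuine obstacle is the bookkeeping at the element boundary: one must confirm that the datum $I_l\mu$ on $\partial K$ coincides with the trace of the affine function $\phi|_K$ — this is precisely where the continuity of $I_l^W\mu$ across $\Ct_L$-edges together with the definition (\ref{trans}) is used — so that the stabilization term vanishes exactly rather than merely approximately. The affineness of $\phi$ on each $K$ (hence $\triangle\phi=0$) is the second feature that makes the flux equation close with zero right-hand side; both properties are special to $p=1$, which explains why the identity is asserted only in that case.
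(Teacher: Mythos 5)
Your proposal is correct and follows essentially the same route as the paper: exhibit the candidate pair $(-\nabla I_l^W\mu,\, I_l^W\mu)$, verify via Green's formula and the definition (\ref{trans}) that it satisfies (\ref{4localm1}-\ref{4localm2}) — with the stabilization term vanishing because $I_l^W\mu|_{\partial K}=I_l\mu|_{\partial K}$ and the flux equation closing because $\Delta I_l^W\mu=0$ for piecewise linears — and conclude by unique solvability of the local problem. Your additional checks (nestedness guaranteeing $I_l^W\mu$ is affine on each $K\in\Ct_L$, and membership of the candidate in ${\BV}(K)\times W(K)$) are implicit in the paper's argument and correctly spelled out.
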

\begin{proof}
The Green's formula and (\ref{trans}) implies
\begin{align*}
(-\nabla I_l^W\mu,{\Bv})_K-(I_l^W\mu,{\rm div}\ {\Bv})_K=
-\langle I_l^W\mu,{\Bv}\cdot {\Bn}\rangle_{\partial K}=
-\langle I_l\mu,{\Bv}\cdot {\Bn}\rangle_{\partial K},
\end{align*}
for all ${\Bv}\in {\BV}(K)$.
Since $I_l^W\mu$ is piecewise linear, then we have
\begin{align}
\label{linear} {\rm div} \ (-\nabla I_l^W\mu)=-\Delta I_l^W\mu=0 ,
\end{align}
which together with (\ref{trans}) yields
\begin{align*}
({\rm div} \ (-\nabla I_l^W\mu),w)_K+\tau_L\langle I_l^W\mu-I_l\mu, w\rangle_{\partial K}=0,\quad \forall w \in W(K).
\end{align*}

Because the local problem (\ref{4localm1}-\ref{4localm2}) is uniquely solvable,  we  derive
\begin{align*}
{\BQ}_{I_l\mu}=-\nabla I_l^W\mu,\qquad U_{I_l\mu}=I_l^W\mu.
\end{align*}
\end{proof}
\begin{theorem}
When $p=1$, the intergrid transfer operator $I_l: M_l\rightarrow M_L$ satisfies
\begin{align*}
\hat a_L(I_l\mu,I_l\mu)\lesssim \hat a_l(\mu,\mu),\quad \forall \mu\in M_l^0.
\end{align*}

\end{theorem}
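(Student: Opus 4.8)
The plan is to evaluate $\hat a_L(I_l\mu,I_l\mu)$ directly from its definition (\ref{bilinear-E}) and show that it collapses to the $H^1$-seminorm of $I_l^W\mu$, which Lemma \ref{allp} already bounds by $\hat a_l(\mu,\mu)$. Writing out the left-hand side, I have
\[
\hat a_L(I_l\mu,I_l\mu)=\|{\BQ}_{I_l\mu}\|_{0,\Omega}^2+\tau_L\|U_{I_l\mu}-I_l\mu\|_{0,\partial\Ct_L}^2,
\]
where $({\BQ}_{I_l\mu},U_{I_l\mu})$ is the local reconstruction solving (\ref{4localm1}-\ref{4localm2}) on each $K\in\Ct_L$ with boundary data $I_l\mu$. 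So the whole argument reduces to understanding these two reconstructed quantities for $p=1$.

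The first step is to invoke Lemma \ref{lem2.3}, which for $p=1$ identifies the reconstruction explicitly as ${\BQ}_{I_l\mu}=-\nabla I_l^W\mu$ and $U_{I_l\mu}=I_l^W\mu$. This immediately rewrites the flux term as $\|{\BQ}_{I_l\mu}\|_{0,\Omega}^2=|I_l^W\mu|_{1,\Omega}^2$, already the quantity controlled by Lemma \ref{allp}. The second step, which is the one point I would treat carefully, is to check that the stabilization term vanishes identically. The key observation is that $U_{I_l\mu}=I_l^W\mu$ lies in $W_L^c\subset C(\Omega)$ and is therefore globally continuous, so its trace on $\partial\Ct_L$ is single-valued and on each face $F\in\mathcal E_L$ agrees with the edge value $I_l^W\mu|_F$. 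By the defining relation (\ref{trans}), $I_l\mu|_{\mathcal E_L}=I_l^W\mu|_{\mathcal E_L}$, so $U_{I_l\mu}-I_l\mu=0$ on $\partial\Ct_L$ and the second term drops out entirely.

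Combining these gives $\hat a_L(I_l\mu,I_l\mu)=|I_l^W\mu|_{1,\Omega}^2$, and then Lemma \ref{allp} furnishes $|I_l^W\mu|_{1,\Omega}^2\lesssim\hat a_l(\mu,\mu)$, which completes the argument. I do not expect a genuine analytical obstacle here, since the substance has been front-loaded into Lemma \ref{lem2.3} (the exact $p=1$ reconstruction) and Lemma \ref{allp} (energy-continuity of $I_l^W$); the only thing demanding attention is the vanishing of the stabilization term, and that hinges entirely on the inter-element continuity of $I_l^W\mu$ built into the averaging definition of $I_l^W$. It is worth noting that this is exactly where the restriction $p=1$ enters: the clean identity $U_{I_l\mu}=I_l^W\mu$ relies on $I_l^W\mu$ being piecewise linear (so that $\Delta I_l^W\mu=0$ elementwise in Lemma \ref{lem2.3}), and the general case would require a more delicate estimate rather than an exact identification.
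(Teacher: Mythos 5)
Your proof is correct and takes essentially the same route as the paper's: both use Lemma \ref{lem2.3} together with \eqref{trans} to collapse $\hat a_L(I_l\mu,I_l\mu)$ to $|I_l^W\mu|_{1,\Omega}^2$ (the stabilization term vanishing because $U_{I_l\mu}=I_l^W\mu$ coincides with $I_l\mu$ on $\partial\Ct_L$), and then conclude via Lemma \ref{allp}. Your write-up simply makes explicit the one-line compression in the paper, namely why $\tau_L\|U_{I_l\mu}-I_l\mu\|_{0,\partial\Ct_L}^2$ drops out.
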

\begin{proof}
The definition of the bilinear form $\hat a_L(\cdot,\cdot)$, Lemma \ref{lem2.3} and (\ref{trans}) yield
\begin{align*}
\hat a_L(I_l\mu,I_l\mu)=\sum_{K\in \Ct_L}\|{\BQ}_{I_l\mu}\|_{0,K}^2=|I_l^W\mu|_{1,\Omega}^2,
\end{align*}
which, together with Lemma \ref{allp}, implies the conclusion.
\end{proof}

For the case $p\geq 2$, since for all $v\in P_p(T)$, $\Delta v$ no
longer equals to 0, which means Lemma \ref{lem2.3} doesn't hold
any more. We will prove the stability estimate for $I_l$ through the
estimation of terms $\|{\BQ}_{I_l\mu}+\nabla I_l^W\mu\|_{0,\Omega}$ and
$\|U_{I_l\mu}-I_l^W\mu\|_{0,\partial \Ct_l}$.

\begin{lemma}
\label{lem2.5}
For all $T\in\Ct_l$, $f\in L^2(T)$, let $({\BQ}_f, U_f)\in{\BV}(T)\times W(T)$ be the solution of the local problem (\ref{4localf1}-\ref{4localf2}). Then
\begin{align}
\|{\BQ}_f\|_{0,T}&\lesssim h_l\|f\|_{0,T},\\
\|U_f\|_{0,T}\lesssim h_l^2\|f\|_{0,T},&\quad \|U_f\|_{0,\partial T}\lesssim h_l^{3/2}\|f\|_{0,T}.
\end{align}
\end{lemma}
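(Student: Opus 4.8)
The plan is to start from an energy identity for the local solver. Testing (\ref{4localf1}) with $\Bv = {\BQ}_f$ gives $\|{\BQ}_f\|_{0,T}^2 = (U_f,{\rm div}\,{\BQ}_f)_T$, and testing (\ref{4localf2}) with $w = U_f$, then integrating $({\BQ}_f,\nabla U_f)_T$ by parts and inserting $\hat{\BQ}_f\cdot{\Bn} = {\BQ}_f\cdot{\Bn} + \tau_h U_f$, the two boundary flux terms ${\BQ}_f\cdot{\Bn}$ cancel, leaving
\begin{align*}
\|{\BQ}_f\|_{0,T}^2 + \tau_h\|U_f\|_{0,\partial T}^2 = (f,U_f)_T.
\end{align*}
This identity is the backbone of the argument: its left-hand side controls precisely $\|{\BQ}_f\|_{0,T}$ and $\|U_f\|_{0,\partial T}$, so the whole proof amounts to reducing the right-hand side to these two quantities.

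Next I would bound $(f,U_f)_T \le \|f\|_{0,T}\|U_f\|_{0,T}$ and therefore need to express the interior norm $\|U_f\|_{0,T}$ through the controlled quantities. Rewriting (\ref{4localf1}) with Green's formula yields $({\BQ}_f + \nabla U_f,\Bv)_T = \langle U_f,\Bv\cdot{\Bn}\rangle_{\partial T}$ for all $\Bv\in{\BV}(T)$, which is exactly the $m=0$ instance of the identity used in Lemma \ref{lem2.2}; running the same argument (test with $\Bv = {\BQ}_f + \nabla U_f$ and invoke the discrete trace inequality) gives $\|{\BQ}_f + \nabla U_f\|_{0,T}\lesssim h_l^{-1/2}\|U_f\|_{0,\partial T}$. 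Combining this with the scaled Poincar\'e--trace inequality $\|U_f\|_{0,T}\lesssim h_l|U_f|_{1,T} + h_l^{1/2}\|U_f\|_{0,\partial T}$ and the splitting $\nabla U_f = ({\BQ}_f + \nabla U_f) - {\BQ}_f$, I would arrive at
\begin{align*}
\|U_f\|_{0,T}\lesssim h_l\|{\BQ}_f\|_{0,T} + h_l^{1/2}\|U_f\|_{0,\partial T}.
\end{align*}

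Substituting this into the energy identity, the right-hand side is bounded by $\|f\|_{0,T}\big(h_l\|{\BQ}_f\|_{0,T} + h_l^{1/2}\|U_f\|_{0,\partial T}\big)$. Using the comparability $\tau_h\simeq h_l^{-1}$ together with Young's inequality, I would split each product so that a fraction of $\|{\BQ}_f\|_{0,T}^2$ and a small multiple of $\tau_h\|U_f\|_{0,\partial T}^2$ are absorbed into the left side, leaving $\|{\BQ}_f\|_{0,T}^2 + \tau_h\|U_f\|_{0,\partial T}^2\lesssim h_l^2\|f\|_{0,T}^2$. The first estimate $\|{\BQ}_f\|_{0,T}\lesssim h_l\|f\|_{0,T}$ is then immediate; the boundary estimate follows since $\tau_h\gtrsim h_l^{-1}$ forces $\|U_f\|_{0,\partial T}^2\lesssim h_l^3\|f\|_{0,T}^2$, i.e. $\|U_f\|_{0,\partial T}\lesssim h_l^{3/2}\|f\|_{0,T}$. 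Finally, feeding these two bounds back into the interior estimate of the previous paragraph yields $\|U_f\|_{0,T}\lesssim h_l^2\|f\|_{0,T}$.

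The main obstacle is the middle step: the energy identity never directly sees $\|U_f\|_{0,T}$, so the crux is producing the interior bound $\|U_f\|_{0,T}\lesssim h_l\|{\BQ}_f\|_{0,T} + h_l^{1/2}\|U_f\|_{0,\partial T}$ with exactly the right powers of $h_l$. Getting these powers correct---through the $h_l^{-1/2}$ gradient estimate inherited from Lemma \ref{lem2.2} and the reference-element scaling of the Poincar\'e--trace inequality---is precisely what makes the subsequent Young's-inequality absorption close and deliver the sharp orders $h_l$, $h_l^2$ and $h_l^{3/2}$. I would also be careful to use the two-sided comparability $\tau_h\simeq h_l^{-1}$ rather than merely $\tau_h = O(h_l^{-1})$, since the lower bound is what both enables the absorption and converts the controlled term $\tau_h\|U_f\|_{0,\partial T}^2$ into the stated bound on $\|U_f\|_{0,\partial T}$.
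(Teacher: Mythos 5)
Your proposal is correct and delivers exactly the sharp orders, but it replaces the paper's key middle step by a different mechanism. Both arguments share the same skeleton: the energy identity $\|\BQ_f\|_{0,T}^2+\tau_l\|U_f\|_{0,\partial T}^2=(f,U_f)_T$ (the paper obtains it from the Green-transformed system (\ref{4localf1G})--(\ref{4localf2G}), you by testing and integrating by parts, which is the same computation), then the interior bound $\|U_f\|_{0,T}\lesssim h_l\|\BQ_f\|_{0,T}+h_l^{1/2}\|U_f\|_{0,\partial T}$, then the identical Young absorption. The difference is how the interior bound is produced. The paper proves the discrete inequality (\ref{equiv}), valid for \emph{every} $w\in P_p(T)$, by showing on the reference element that
$\frac{\sup_{\hat\Bv\in(P_p(\hat T))^2}|\int_{\hat T}\hat w\,{\rm div}\,\hat\Bv|}{\|\hat\Bv\|_{0,\hat T}}+\|\hat w\|_{0,\partial\hat T}$
is a norm on $P_p(\hat T)$ --- which requires Lemma A.1 of \cite{Cockburn2} (a polynomial $L^2$-orthogonal to $P_{p-1}(\hat T)$ is controlled by its trace on a face) --- and then transferring it with a divergence-conserving Piola map; only afterwards is (\ref{4localf1G}) used to replace the sup by $\|\BQ_f\|_{0,T}$. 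You instead observe that (\ref{4localf1}) is the $m=0$ instance of the Lemma \ref{lem2.2} setup, rerun that argument to get $\|\BQ_f+\nabla U_f\|_{0,T}\lesssim h_l^{-1/2}\|U_f\|_{0,\partial T}$, and combine it with the scaled Poincar\'e--trace inequality $\|U_f\|_{0,T}\lesssim h_l|U_f|_{1,T}+h_l^{1/2}\|U_f\|_{0,\partial T}$; both steps check out (the trace inequality applies since $\BQ_f+\nabla U_f$ is polynomial, and the Poincar\'e--trace inequality scales correctly). Your route is more elementary and self-contained: it recycles a lemma already proved in the paper and avoids both the reference-element norm-equivalence argument and the external Lemma A.1 of \cite{Cockburn2}. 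What the paper's route buys is the standalone estimate (\ref{equiv}), a dual-norm equivalence for arbitrary polynomials that is potentially reusable beyond this lemma. Finally, your explicit insistence on the two-sided comparability $\tau_l\simeq h_l^{-1}$ is a genuine point of care: the paper states only $\tau_l=O(h_l^{-1})$ but tacitly uses the lower bound both in choosing $\delta<\min\{\frac{1}{2},\frac{1}{2}\tau_l h_l\}$ (so that $\delta$ stays bounded away from zero) and in converting $\tau_l\|U_f\|_{0,\partial T}^2\lesssim h_l^2\|f\|_{0,T}^2$ into $\|U_f\|_{0,\partial T}\lesssim h_l^{3/2}\|f\|_{0,T}$.
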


\begin{proof}
Using the Green's formula,  we know there exists a $({\BQ}_f, U_f)\in{\BV}(T)\times W(T)$ such that
\begin{align}
\label{4localf1G}
({\BQ}_f, \Bv)_T-(U_f, {\rm div}\ {\Bv})_T=0,\\
\label{4localf2G}
({\rm div}\ {\BQ}_f, w)_T+\tau_l\langle  U_f,w\rangle_{\partial T}=(f,w)_T,
\end{align}
for all $({\Bv},w)\in{\BV}(T)\times W(T)$.

Next we prove that for all $w\in P_p(T)$, there holds
\begin{align}
\label{equiv}
\|w\|_{0,T}\lesssim h_l\frac{{\rm sup}_{{\Bv}\in V(T)}\ |\int_T w {\rm div}\ {\Bv}|}{\|{\Bv}\|_{0,T}}
+h_l^{1/2}\|w\|_{0,\partial T}.
\end{align}
Supposing $\hat T$ is the standard triangle element,  and $F_T:\hat T\rightarrow T$  is a linear  map which is defined by $x=F_T(\hat x):=B_T\hat x+b$.
A scalar function $w$ on $T$ is transformed to a scalar function $\hat w$ on $\hat T$ by $\hat w:=w\circ F_T(\hat x)$, while  for the vector function ${\Bv}$, we transform ${\Bv}$ to $\hat {\Bv}$ via  $\hat {\Bv}:={\rm det}(B_T)B_T^{-1} {\Bv}\circ F_T(\hat x)$. We note that this is a divergence conserving transformation (see Lemma 3.59 in \cite{peter}), i.e.
$${\rm div}\ \hat {\Bv}={\rm det}(B_T){\rm div}\ {\Bv}.$$
Define
$$\|\hat w\|:=\frac{{\rm sup}_{\hat{\Bv}\in (P_p(\hat T))^2} |\int_{\hat T} \hat w {\rm div}\ \hat{\Bv}|}{\|\hat{\Bv}\|_{0,\hat T}}
+\|\hat w\|_{0,\partial \hat T}.$$ Now we prove that $\|\cdot\|$ is
a norm in the space $P_p(\hat T)$. Apparently we have
\begin{align*}
 \|c\hat w\|=|c| \ \|\hat w\|,\\
 \|\hat w+\hat u\|\leq \|\hat w\|+\|\hat u\|,
\end{align*}
for all $\hat w,\ \hat u\in P_p(\hat T)$ and $c\in \mathbb R$. Hence
we only need to verify that if $\|\hat w\|=0$, then $w=0$.

If $\|\hat w\|=0$, by the definition of $\|\cdot\|$, we have
$$\int_{\hat T}\hat w \hat u_{p-1}=0,\quad \forall \hat u_{p-1}\in P_{p-1}(\hat T).$$
By Lemma A.1 in \cite{Cockburn2}, we know
\begin{align*}
\|\hat w\|_{0,\hat T}\lesssim \|\hat w\|_{0,\hat F},\quad \forall
\hat F\in \partial \hat T.
\end{align*}
which implies $\hat w=0$. The scaling argument and the equivalence of
the norms  $\|\cdot\|_{0,\hat T}$ and  $\|\cdot\|$ in the finite
dimensional space $P_p(\hat T)$ yield
\begin{align}
\nn
h_l^{-1}\|w\|_{0,T}&\lesssim \|\hat w\|_{0,\hat T}\lesssim \frac{{\rm sup}_{\hat{\Bv}\in (P_p(\hat T))^2} |\int_{\hat T} \hat w {\rm div}\ \hat{\Bv}|}{\|\hat{\Bv}\|_{0,\hat T}}+\|\hat w\|_{0,\partial \hat T}\\
\nn
&\lesssim \frac{{\rm sup}_{{\Bv}\in V(T)}\ |\int_T w {\rm div}\ {\Bv}|}{\|{\Bv}\|_{0,T}}
+h_l^{-1/2}\|w\|_{0,\partial T},
\end{align}
which results in (\ref{equiv}).
Taking ${\Bv}={\BQ}_f$ in (\ref{4localf1G}) and  $w=U_f$ in (\ref{4localf2G}), we can deduce
\begin{align}
\label{Qf}
\|{\BQ}_f\|_{0,T}^2+\tau_l\|U_f\|_{0,\partial T}^2=(f,U_f)_T.
\end{align}
By (\ref{equiv}) and (\ref{4localf1G}), we get
\begin{align}
\nn
\|U_f\|_{0,T}&\lesssim h_l\frac{{\rm sup}_{{\Bv}\in V(T)}\ |\int_T U_f {\rm div}\ {\Bv}|}{\|{\Bv}\|_{0,T}}
+h_l^{1/2}\|U_f\|_{0,\partial T}\\
\nn
&\lesssim h_l\frac{{\rm sup}_{{\Bv}\in V(T)}\ |\int_T {\BQ}_f  {\Bv}|}{\|{\Bv}\|_{0,T}}
+h_l^{1/2}\|U_f\|_{0,\partial T}\\
\label{Uf}
&\lesssim h_l \|{\BQ}_f\|_{0,T}+h_l^{1/2}\|U_f\|_{0,\partial T}.
\end{align}
Taking (\ref{Uf}) to (\ref{Qf}), and utilizing the Young's inequality gives
\begin{align*}
\|{\BQ}_f\|_{0,T}^2+\tau_l\|U_f\|_{0,\partial T}^2&\lesssim \|f\|_{0,T}(h_l \|{\BQ}_f\|_{0,T}+h_l^{1/2}\|U_f\|_{0,\partial T})\\
&\leq C_{\delta}h_l^2\|f\|_{0,T}^2+\delta \|{\BQ}_f\|_{0,T}^2+\delta h_l^{-1}\|U_f\|_{0,\partial T}^2.
\end{align*}
Choosing $\delta<{\rm min}\ \{ \frac{1}{2}, \frac{1}{2}\tau_l h_l \}$, we obtain
\begin{align*}
\|{\BQ}_f\|_{0,T}^2+\tau_l\|U_f\|_{0,\partial T}^2\lesssim h_l^2\|f\|_{0,T}^2.
\end{align*}
Hence
\begin{align*}
\|{\BQ}_f\|_{0,T}\lesssim h_l\|f\|_{0,T}, \quad \|U_f\|_{0,\partial T}\lesssim h_l^{3/2}\|f\|_{0,T}.
\end{align*}
By (\ref{Uf}), we  derive
\begin{align*}
\|U_f\|_{0,T}\lesssim h_l^2\|f\|_{0,T}.
\end{align*}
\end{proof}

Now we are ready to prove the stability  of the intergrid transfer operator for the case $p\geq 2$.

\begin{theorem}
When $p\geq 2$, the intergrid transfer operator $I_l: M_l\rightarrow M_L$ satisfies
\begin{align*}
\hat a_L(I_l\mu,I_l\mu)\lesssim \hat a_l(\mu,\mu),\quad \forall \mu\in M_l^0.
\end{align*}
\end{theorem}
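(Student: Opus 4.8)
The plan is to reduce the estimate, exactly as in the $p=1$ case, to a bound of the form $\hat a_L(I_l\mu,I_l\mu)\lesssim |I_l^W\mu|_{1,\Omega}^2$, after which Lemma \ref{allp} immediately yields the claim. Writing out the bilinear form (\ref{bilinear-E}) on the fine mesh, $\hat a_L(I_l\mu,I_l\mu)=\|{\BQ}_{I_l\mu}\|_{0,\Omega}^2+\tau_L\|U_{I_l\mu}-I_l\mu\|_{0,\partial\Ct_L}^2$, where $({\BQ}_{I_l\mu},U_{I_l\mu})$ is the elementwise solution of (\ref{4localm1}-\ref{4localm2}) on each $K\in\Ct_L$ with data $I_l\mu$. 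Since $I_l^W\mu\in W_L^c$ is continuous and, by (\ref{trans}), its trace on $\mathcal E_L$ equals $I_l\mu$, the penalty term is $\tau_L\|U_{I_l\mu}-I_l^W\mu\|_{0,\partial\Ct_L}^2$, and by the triangle inequality the flux term is controlled once we bound $\|{\BQ}_{I_l\mu}+\nabla I_l^W\mu\|_{0,\Omega}$ and $|I_l^W\mu|_{1,\Omega}$. This is precisely why we singled out the two quantities $\|{\BQ}_{I_l\mu}+\nabla I_l^W\mu\|_{0,\Omega}$ and $\|U_{I_l\mu}-I_l^W\mu\|_{0,\partial\Ct_L}$.

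The key step is to identify the pair $(\BE,e):=({\BQ}_{I_l\mu}+\nabla I_l^W\mu,\,U_{I_l\mu}-I_l^W\mu)$ as the solution of a \emph{source-driven} local problem. First I would check that $(-\nabla I_l^W\mu,I_l^W\mu)$ satisfies the first local equation (\ref{4localm1}) exactly: this is just Green's formula together with the trace identity $I_l^W\mu=I_l\mu$ on $\partial K$, exactly as in the proof of Lemma \ref{lem2.3}. Plugging $(-\nabla I_l^W\mu,I_l^W\mu)$ into the second equation (\ref{4localm2}), the penalty contribution $\tau_L(I_l^W\mu-I_l\mu)$ vanishes on $\partial K$, the boundary terms cancel after integration by parts, and one is left with the residual $-(\Delta I_l^W\mu,w)_K$ on the right-hand side. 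Subtracting these two identities from the defining equations for $({\BQ}_{I_l\mu},U_{I_l\mu})$ shows that $(\BE,e)$ solves precisely the local problem (\ref{4localf1}-\ref{4localf2}) on $K$ with load $f=\Delta I_l^W\mu$; here it is essential that, by nestedness, $I_l^W\mu|_K\in\mathcal P_p(K)$ and $\nabla I_l^W\mu\in(\mathcal P_{p-1}(K))^d\subset{\BV}(K)$, so the computation stays inside the discrete spaces.

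With this identification, Lemma \ref{lem2.5} (applied on the fine mesh, with $h_l$ replaced by $h_L$) gives $\|\BE\|_{0,K}\lesssim h_L\|\Delta I_l^W\mu\|_{0,K}$ and $\|e\|_{0,\partial K}\lesssim h_L^{3/2}\|\Delta I_l^W\mu\|_{0,K}$. Since $I_l^W\mu$ is a single polynomial of degree $p$ on each coarse element $T\in\Ct_l$, the inverse inequality yields $\|\Delta I_l^W\mu\|_{0,T}\lesssim h_l^{-1}|I_l^W\mu|_{1,T}$. Summing over all fine elements and using $h_L\le h_l$ together with $\tau_L=O(h_L^{-1})$, I obtain $\|\BE\|_{0,\Omega}\lesssim (h_L/h_l)|I_l^W\mu|_{1,\Omega}\lesssim|I_l^W\mu|_{1,\Omega}$ and $\tau_L\|e\|_{0,\partial\Ct_L}^2\lesssim (h_L/h_l)^2|I_l^W\mu|_{1,\Omega}^2\lesssim|I_l^W\mu|_{1,\Omega}^2$. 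Feeding these back into the expansion of $\hat a_L$ gives $\hat a_L(I_l\mu,I_l\mu)\lesssim|I_l^W\mu|_{1,\Omega}^2$, and Lemma \ref{allp} closes the argument.

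The main obstacle is the algebraic identification in the second paragraph: one must track the numerical-flux definition carefully so that the penalty terms cancel and the residual collapses exactly to $\Delta I_l^W\mu$, which is what converts the non-nested transfer defect into a clean instance of Lemma \ref{lem2.5}. The remaining scaling is routine; the only point worth noting is that a favorable factor $(h_L/h_l)^2$ — rather than merely a bounded constant — comes out, reflecting that the degree-$p$ deviation of the locally reconstructed $({\BQ}_{I_l\mu},U_{I_l\mu})$ from $(-\nabla I_l^W\mu,I_l^W\mu)$ is small on the fine mesh.
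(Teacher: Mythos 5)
Your proposal is correct and takes essentially the same route as the paper's proof: it makes the identical identification of $({\BQ}_{I_l\mu}+\nabla I_l^W\mu,\,U_{I_l\mu}-I_l^W\mu)$ as the solution of the source-driven local problem (\ref{4localf1}-\ref{4localf2}) with $f=\Delta I_l^W\mu$ (using $I_l\mu=I_l^W\mu$ on $\partial\Ct_L$ and nestedness so that $I_l^W\mu|_K\in W(K)$), then applies Lemma \ref{lem2.5} on the fine mesh, the inverse inequality on coarse elements, $\tau_L=O(h_L^{-1})$, and Lemma \ref{allp}. The only cosmetic difference is that you retain the sharper factor $(h_L/h_l)^2$ in the penalty term, where the paper simply bounds $h_L^{3/2}/h_l\le h_L^{1/2}$ and uses $\tau_L h_L\lesssim 1$; both yield the same conclusion.
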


\begin{proof}
Let $({\BQ}_{I_l\mu},U_{I_l\mu})\in {\BV}(K)\times W(K)$ be the solution of the local problem (\ref{4localm1}-\ref{4localm2}) for any $K\in \Ct_L$, $I_l\mu\in L^2(\partial K)$. Then using the Green's formula, we know that there exists a
$({\BQ}_{I_l\mu}, U_{I_l\mu})\in{\BV}(K)\times W(K)$ such that
\begin{align}
\label{4localm1G}
({\BQ}_{I_l\mu}, \Bv)_K-(U_{I_l\mu}, {\rm div}\ {\Bv})_T=-\langle I_l\mu, {\Bv}\cdot {\Bn}\rangle_{\partial K},\\
\label{4localm2G}
({\rm div}\ {\BQ}_{I_l\mu}, w)_K+\tau_L\langle  U_{I_l\mu}-I_l\mu,w\rangle_{\partial K}=0,
\end{align}
for all $({\Bv},w)\in{\BV}(K)\times W(K)$.
The Green's formula and (\ref{trans}) imply
\begin{align}
\label{locala1}
(-\nabla I_l^W\mu,{\Bv})_K-(I_l^W\mu,{\rm div}\ {\Bv})_K=
-\langle I_l^W\mu,{\Bv}\cdot {\Bn}\rangle_{\partial K}=
-\langle I_l\mu,{\Bv}\cdot {\Bn}\rangle_{\partial K},\\
\label{locala2} ({\rm div} \ (-\nabla I_l^W\mu),w)_K+\tau_L\langle
I_l^W\mu-I_l\mu, w\rangle_{\partial K}=(-\Delta I_l^W\mu,w)_K,
\end{align}
for all $({\Bv},w)\in{\BV}(K)\times W(K)$.
Denote $e_{{\BQ}}={\BQ}_{I_l\mu}+\nabla I_l^W\mu$ and $e_U=U_{I_l\mu}-I_l^W\mu$. Then
$(e_{{\BQ}}, e_U)\in{\BV}(K)\times W(K)$ satisfy
\begin{align}
(e_{{\BQ}}, \Bv)_K-(e_U, {\rm div}\ {\Bv})_K=0, \\
({\rm div}\ e_{{\BQ}}, w)_K+\tau_L\langle  e_U,w\rangle_{\partial
K}=(\Delta I_l^W\mu,w)_K,
\end{align}
for all $({\Bv},w)\in{\BV}(K)\times W(K)$.
By Lemma \ref{lem2.5}, we  get
\begin{align*}
\|e_{{\BQ}}\|_{0,K}\lesssim h_L \|\Delta I_l^W\mu\|_{0,K},\\
\|e_{U}\|_{0,K}\lesssim h_L^2 \|\Delta I_l^W\mu\|_{0,K},\\
\|e_{U}\|_{0,\partial K}\lesssim h_L^{3/2} \|\Delta
I_l^W\mu\|_{0,K}.
\end{align*}
Summing up for all $K\in\Ct_L$ and utilizing the inverse inequality, we have
\begin{align}
\|e_{{\BQ}}\|_{0,\Omega}\lesssim \frac{h_L}{h_l} |I_l^W\mu|_{1,\Omega}\lesssim |I_l^W\mu|_{1,\Omega},\\
\|e_{U}\|_{0,\Omega}\lesssim  \frac{h_L^2}{h_l} |I_l^W\mu|_{1,\Omega}\lesssim h_L|I_l^W\mu|_{1,\Omega},\\
\|e_{U}\|_{0,\partial \Ct_L}\lesssim  \frac{h_L^{3/2}}{h_l} |I_l^W\mu|_{1,\Omega}\lesssim h_L^{1/2}|I_l^W\mu|_{1,\Omega}.
\end{align}
Then we can deduce
$$\|{\BQ}_{I_l\mu}\|_{0,\Omega}\lesssim |I_l^W\mu|_{1,\Omega}+\|e_{{\BQ}}\|_{0,\Omega}\lesssim |I_l^W\mu|_{1,\Omega}$$
and
$$\|U_{I_l\mu}-I_l\mu\|_{0,\partial \Ct_L}=\|e_U\|_{0,\partial \Ct_L}.$$
Hence
\begin{align*}
\hat a_L(I_l\mu,I_l\mu)&=\|{\BQ}_{I_l\mu}\|_{0,\Omega}^2+\tau_L\|U_{I_l\mu}-I_l\mu\|_{0,\partial \Ct_L}^2\\
&\lesssim |I_l^W\mu|_{1,\Omega}^2+\tau_Lh_L|I_l^W\mu|_{1,\Omega}^2\\
&\lesssim |I_l^W\mu|_{1,\Omega}^2,
\end{align*}
which, together with Lemma \ref{allp}, complete the proof.
\end{proof}

\begin{remark}
In this paper, the proof of the stability estimate of the intergrid transfer operator is specified in meshes consisting of triangles, but we should mention that it can be extended to
the meshes constituted with rectangles, tetrahedra or hexahedra.
\end{remark}

\section{Local Fourier analysis (LFA)}\label{LFA}

In this section, LFA will be used to give a quantitative insight of
the convergence of Algorithm \ref{alg} in 1D case. For simplicity, in this section we focus on the analysis for the HDG discretization based on linear polynomial (P1) approximation (HDG-P1). We mainly consider the
analysis of two level method of Algorithm \ref{alg}. The LFA of
three level method is also mentioned. The analysis imply the
efficiency of Algorithm \ref{alg}. We adopt the notations and
philosophy in \cite{WJ04}.


There are some necessary simplifications in the framework of LFA: the boundary conditions are neglected and the
problem is considered on regular indefinite grids $\Cg_h = \{ x:
x=x_j=jh, j \in \mathbb{Z} \}$. It seems to be very restrictive and very unrealistic since the Robin boundary condition (\ref{HE}) and other
absorbing boundary conditions are often applied in realistic Helmholtz problem, the neglect of boundary  conditions does usually not affect the validity of LFA (cf. \cite{WJ04}).

For a fixed point $x\in \Cg_h$ and any infinite grid function $u_h$, we can define an operator $\BL_h$ on the space of infinite grid functions by
\begin{align}
\BL_h u_h(x) = \sum_{j \in J} l_j u_h(x+jh), \nn
\end{align}
with stencil coefficients $l_j$ and a certain finite subset $J\subset \mathbb{Z}$. $\BL_h = [l_j]_h, j \in J$ is a stencil representation of $\BL_h$. This formulation is particularly convenient in the context of LFA. It can be easily seen that the eigenfunctions of $\BL_h$ are given by  $
\varphi_h(\theta,x) = e^{{\bf i} \theta x/h}$ with $x \in \Cg_h$ and
 $\theta \in \mathbb{R}$. In fact, the frequency
$\theta$ can be restricted to the interval $(-\pi,\pi]$ as a fact
that $\varphi_h(\theta + 2\pi,x) = \varphi_h(\theta,x)$. These eigenfunctions are called Fourier components associated with a Fourier frequency $\theta$. The corresponding eigenvalues of $\BL_h$ which are called Fourier symbols read as
$
\widetilde{\BL}_h(\theta) = \sum_{j\in J} l_j e^{{\bf i} \theta j}$, and satisfy the following equality
\begin{align}
\BL_h \varphi_h(\theta,x) = \widetilde{\BL}_h(\theta)
\varphi_h(\theta,x), \qquad x\in \Cg_h , \theta \in (-\pi,\pi].
\label{Lh-stencil}
\end{align}
 Given a
so-called low frequency $\theta^0\in \Theta_{\rm
low}:=(-\pi/2,\pi/2]$, its complementary frequency $\theta^1$ is
defined as
\begin{align}
\theta^1 = \theta^0-{\rm sign}(\theta^0)\pi. \label{theta1}
\end{align}
It is appropriate to divide the Fourier space into the following two
dimensional subspace
\begin{align}
E_{2h}^{\theta^0} := {\rm span} \{
\varphi_h(\theta^0,x),\varphi_h(\theta^1,x)  \}, \label{Etheta}
\end{align}
where the Fourier components $\varphi_h(\theta^0,x) $ and
$\varphi_h(\theta^1,x) $ are called $2h$-harmonics. The definition
of the $2h$-harmonics is motivated by the fact that each low
frequency $\theta^0\in \Theta_{\rm low}$ is coupled with $\theta^1$
in the transition from $\Cg_h$ to $\Cg_{2h}$. Indeed they coincide
with each other on the coarse grid. Interpreting the Fourier
components as coarse grid functions yields
\[
\varphi_h(\theta^0,x) = \varphi_{2h}(2\theta^0,x) =
\varphi_{2h}(2\theta^1,x) = \varphi_h(\theta^1,x), \qquad \theta^0
\in \Theta_{\rm low}, \ x\in \Cg_{2h}.
\]
A crucial observation is that the space $E_{2h}^{\theta^0}$ is
invariant under both smoothing operators and correction schemes for
general cases by two level method. The invariance property holds for
many well-known smoothing methods (cf. \cite{WJ04}), such as Jacobi
relaxation, lexicographical Gauss-Seidel relaxation, et al.

The main goal of LFA is to estimate the spectral radius or certain
norms of the $k$-level operator. Let $\BM_h$ be a discrete two level
operator. In the following we will show that a block-diagonal
representation for $\BM_h$ consists of $2\times 2$ blocks
$\widetilde{\BM}_h(\theta)$ (cf. \cite{WJ04}), which denotes the
representation of $\BM_h$ on $E_{2h}^{\theta^0}$. Then the
convergence factor of $\BM_h$ by the LFA is defined as follows:
\[
\rho(\BM_h) := {\rm sup} \{ \rho(\widetilde{\BM}_h(\theta)): \ \theta
\in \Theta_{\rm low} \},
\]
where $ \rho(\widetilde{\BM}_h(\theta))$ is the spectral radius of
the matrix $\widetilde{\BM}_h(\theta)$. The generalizations to
$k$-level analysis are shown in \cite{WJ04}.

\subsection{One dimensional Fourier symbols}

In this subsection, we give the Fourier symbols of different
operators in multilevel method for the HDG-P1 discretization for one
dimensional Helmholtz equation. Since the boundary condition is
neglected in the LFA, the stencil presentation of discretization
operator $\BA_h$ from (\ref{Ah}) can be derived as
\begin{align}
\BA_h = [ s_1  \ \ s_0 \ \ s_1
]_h,\nn
\end{align}
where
\begin{align*}
 s_1=\frac{1}{t}\Big(\frac{\sigma_2}{t \sigma_1}-\frac{\sigma_2(3t^2{\bf i}+18)}{(18t-t^3)\sigma_1}\Big)-\frac{-2t^4{\bf i}+12t^2{\bf i}+72+136{\bf i}}{t^5-24t^3+148t}
 -\frac{6t^2-72+12{\bf i}}{t^5-24t^3+148t},
 \end{align*}
 and
 \begin{align*}
 s_0&=-\frac{1}{t}\Big(\frac{\sigma_3}{ \sigma_4}-\frac{-4t^7{\bf i}+t^5(20+84{\bf i})+t^3(-432-480{\bf i})+t(2736+432{\bf i})}{t(t^8-24t^6+184t^4-864t^2+5328)}+1\Big)\\
 & \quad -\frac{-4t^4{\bf i}+60t^2{\bf i}+72-160{\bf i}}{t^5-24t^3+148t}
 -\frac{\sigma_3}{t\sigma_4},
 \end{align*}
 here $t=kh$, and
 \begin{align*}
 &\sigma_1=t^4{\bf i}+t^2(8-12{\bf i})-72-12{\bf i}, \quad
 \sigma_2=36t-2t^3,\\
  &\sigma_3=6t^2-72+12{\bf i},\quad
  \sigma_4=t^4-24t^2+148.
\end{align*}
Combining the above expression and (\ref{Lh-stencil}) yields the
Fourier symbol of $\BA_h$ as
\begin{align}
\widetilde{\BA}_h(\theta) =2s_1 \cos{\theta} +s_0 . \label{Ah-repre}
\end{align}
For simplicity, we use standard weighted Jacobi ($\omega$-JAC) and
lexicographical Gauss-Seidel (GS-LEX) relaxations as the smoothers
in the LFA. It is easy to derive the weighted Jacobi relaxation
matrix as $\BS^J_h = \BI_h - \omega \BD^{-1}_h \BA_h$, where
$\BI_h$ is indentity matrix, $\BD_h$ consists of the diagonal of
$\BA_h$ and $\omega$ is a weighted parameter. Due to the fact that
$\BD_h = s_0\BI_h$, one can easily deduce the Fourier
symbol of weighted Jacobi relaxation as follows:
\begin{align}
\widetilde{\BS}^J_h (\theta) = 1-\frac{2\omega}{s_0}( s_1 \cos{\theta} +\frac{s_0}{2} ). \label{SJh-repre}
\end{align}
The GS-LEX relaxation matrix is $\BS^{GS}_h = (\BD_h -
\BL_h)^{-1}\BU_h$, where $-\BL_h$ is the strictly lower triangular
part of $\BA_h$ and $-\BU_h$ is the strictly upper triangular part
of $\BA_h$. The Fourier symbol of $\BS^{GS}_h$ can also be
directly derived that
\begin{align}
\widetilde{\BS}^{GS}_h (\theta) = - \frac{ s_1 e^{{\bf i}\theta}}{ s_1 e^{{\bf -i}\theta}+s_0}.\label{GSJh-repre}
\end{align}

Note that for the restriction matrix $\BI^{2h}_h=[r_j]^{2h}_h$ and
$x\in \Cg_{2h}$, there holds
\[
(\BI^{2h}_h\varphi_h(\theta^\alpha,\cdot))(x) = \sum_{j\in J}r_j
e^{{\bf i} j \theta^\alpha}\varphi_h(\theta^\alpha,x) = \sum_{j\in
J}r_j e^{{\bf i} j \theta^\alpha}\varphi_{2h}(2\theta^0,x),\qquad
\alpha = 0,1.
\]
By an analogous stencil argument, the stencil presentation of full
weighting restriction matrix for the HDG-P1 discretization system in
one dimensional case is derived to be $\BI^{2h}_h = [1/4, 1/2,
1/4]_h^{2h}$. Thus, the Fourier symbol of $\BI^{2h}_h$ can be
deduced as
\[
\widetilde{\BI}^{2h}_h(\theta)  = \frac{1}{2} (1+\cos{\theta}).
\]
For the linear prolongation matrix $\BI^{h}_{2h}$ which is defined as
\begin{align*}
(\BI^{h}_{2h} \varphi_{2h} (2\theta^0,\cdot))(x) =
\varphi_{2h}(2\theta^0,x) = \varphi_h(\theta^0,x),\qquad x\in
\Cg_{2h}, \theta^0 \in \Theta_{\rm low},\\
(\BI^{h}_{2h} \varphi_{2h} (2\theta^0,\cdot))(x) = \frac{1}{2} (
\varphi_h(\theta^0,x-h) + \varphi_h(\theta^0,x+h) ), \qquad x \in
\Cg_h\setminus \Cg_{2h}, \theta^0 \in \Theta_{\rm low},
\end{align*}
one can also
obtain its Fourier symbol as follows (cf. \cite{WJ04}):
\[
\widetilde{\BI}^{h}_{2h}(\theta)  = \frac{1}{2} (1+\cos{\theta}).
\]

\subsection{Smoothing analysis}
Since every two dimensional subspace of $2h$-harmonics
$E^{\theta^0}_{2h}$ with $\theta^0 \in \Theta_{\rm low}$ is left
invariant under the $\omega$-JAC and GS-LEX relaxations, then the
Fourier representation of smoother $\BS_h=\BS^J_h$ or $\BS^{GS}_h$
with respect to $E_{2h}^{\theta^0}$ can be written as
\begin{equation}
\left[
  \begin{array}{cc}
    \widetilde{\BS}_h(\theta^0)  & 0  \\
    0 & \widetilde{\BS}_h(\theta^1)\\
  \end{array}
\right],
\end{equation}
where $\widetilde{\BS}_h(\theta)$ is the smoother symbol derived in
(\ref{SJh-repre}) and (\ref{GSJh-repre}). The spectral radius of the
smoother operator can be easily calculated since the above matrix is
diagonal.

We concern on the LFA for HDG-P1 method. The left graph of Figure \ref{fig-jacobi-1} shows the
Fourier symbols $\widetilde{\BS}^J_h(\theta)$ for $\omega$-JAC
smoother with $\omega=0.6$. We find that
$\widetilde{\BS}^J_h(\theta)\geq 1$ always occur at the low
frequencies, and small $t$ leads to a better relaxation. Similar
phenomenon is also observed for GS-LEX smoother in the right graph
of Figure \ref{fig-jacobi-1}. Thus, for fixed wave number $\kappa$,
both $\omega$-JAC and GS-LEX relaxations can be used as smoother on
fine grids, but on coarse grids they may amplify the error.
\begin{figure}[!h]
\centering
    \includegraphics[width=2.4in]{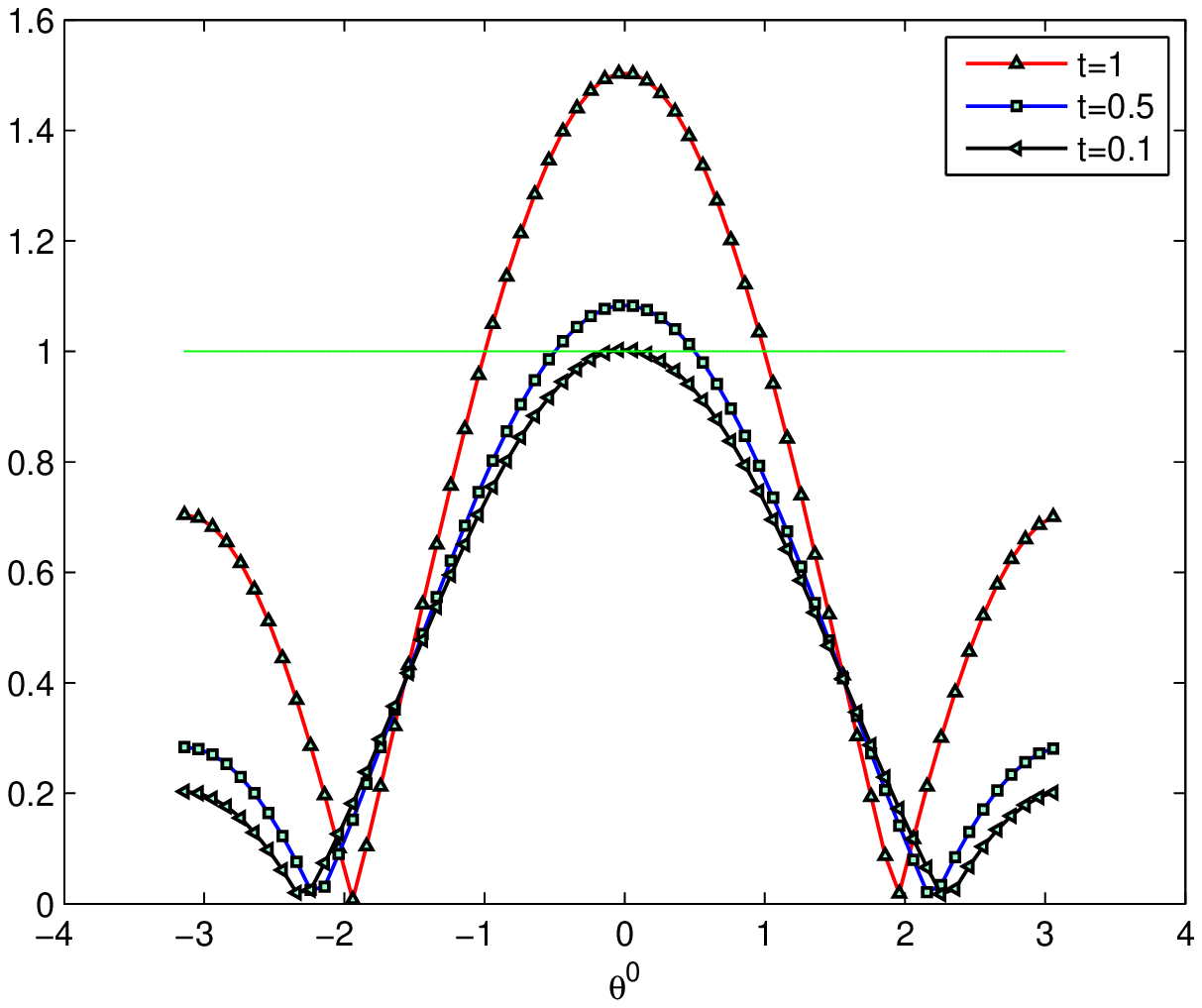}
    \includegraphics[width=2.4in]{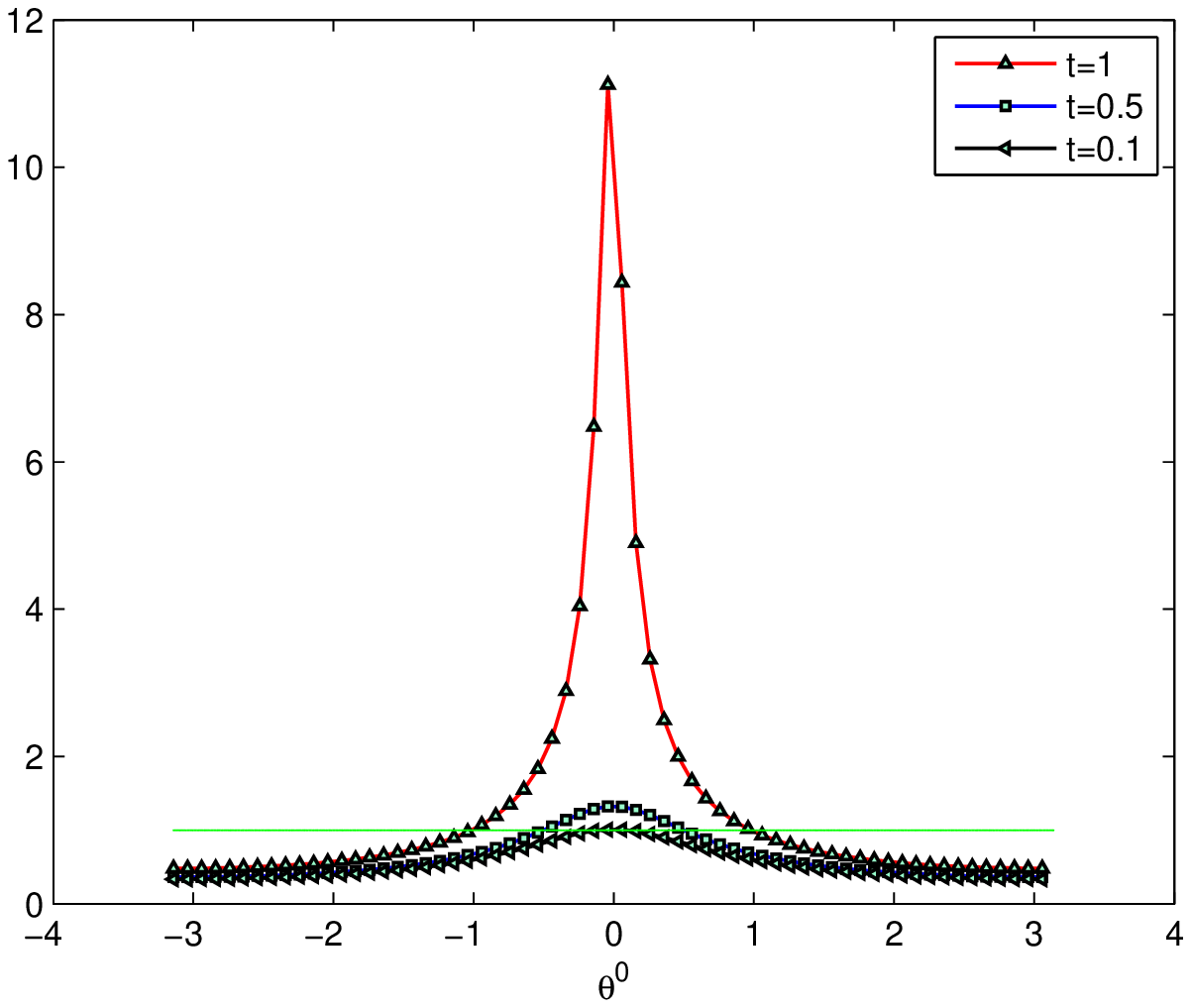}
    \caption{\small $|\widetilde{\BS}^J_h(\theta)|$ with $\omega=0.6$ (left) and $|\widetilde{\BS}^{GS}_h(\theta)|$ (right) over $(-\pi,\pi]$ for $t=0.1,0.5,1$.  }\label{fig-jacobi-1}
\end{figure}
Motivated by the idea in \cite{EEO01}, we use GMRES smoothing on
coarse grids. Unfortunately, since the GMRES smoothing is nonlinear
in the starting value, its Fourier symbol can not be derived. In the
following, we will give some explanations for the performance of
GMRES smoothing from the numerical point of view.
\begin{figure}[!htbp]
\centering
    \includegraphics[width=2.4in]{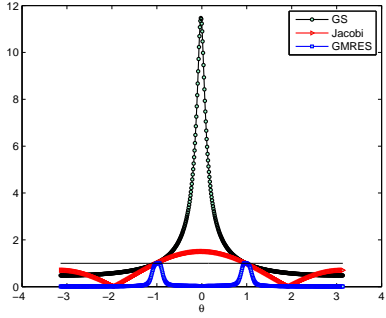}
    \caption{\small Amplification factor of GS, Jacobi and GMRES smoothing for $t=1$.  }\label{fig-smooth-gmres}
\end{figure}

For the ease of presentation, we restrict ourselves to the one
dimensional Helmholtz equation on an interval $(0,10)$ with
homogeneous Dirichlet boundary conditions. For $\kappa=200$, we
apply the grid with mesh size $h=0.005$, i.e., $t=1$. Let the vector
$u_0=e^{{\bf i}\theta \Bx/h}$ be an initial choice for smoothing,
where $\Bx=[x_1,\cdots,x_{N-1}],x_k =
x_{k-1}+kh,x_0=0,k=1,\cdots,N-1$, $N=10/h-1$, $\theta \in
(-\pi,\pi]$. We assume that $\BS_h$ is the relaxation iteration
matrix and $u_1=\BS_h u_0$ is the new vector after one step of
smoothing. Then for fixed $\theta$, we obtain the amplification
factor for one step of smoothing $\rho_s(\theta)=\|u_1\| / \|u_0\|$,
where $\|\cdot\|$ stands for the Euclidean norm. We can see from
Figure \ref{fig-smooth-gmres}, the amplification factor of GMRES
relaxation is always smaller than that of GS and Jacobi relaxations.
When the other two smoothers fail, the GMRES relaxation can still
lead to convergence. Hence, we replace them with GMRES smoothing on
coarse grids.

\subsection{Two  and three level local Fourier analysis}
We have briefly characterized the smoothing procedure in the
multilevel algorithm, in this subsection, the influence of coarse
grid correction will be taken into account. We will focus on the LFA
of two level method and concisely mention the three level method.
For simplicity, we consider the two and three level methods without
post-smoothing and with one step of smoothing on each level. Since
the Fourier symbol can not be obtained for GMRES smoothing, we only
consider the two and three level methods with $\omega$-JAC or GS-LEX
relaxation. Then the iteration operator of Algorithm \ref{alg} in
this simple case can be derived as $ (I-T_L)\cdots(I-T_0), $ where
$T_l =\mu_l I_l R_l A_l P_l$, $R_l$ is smoothing operator.

For the two level method, the iteration matrix is given by
\[
\BM_2 = \Big(\BI_1 -  \mu_1(\BI_1 - \BS_1)\Big)\Big(\BI_1 - \mu_0 \BI^1_0(\BA_0)^{-1} \BI^0_1
\BA_1\Big).
\]
Here, for $l\geq 0$, $\BS_l = \BI_l - \BR_l \BA_l$ is
smoothing relaxation matrix, $\BI_l$ with the same size as $\BA_l$
is identity matrix , $\BI^s_l(s>l)$ is prolongation matrix from
level $l$ to $s$, $\BI^s_l(s<l)$ is restriction matrix from level
$l$ to $s$, and $\BR_l$ stands for matrix representation of
smoother $R_l$.

Since every two dimensional subspace (\ref{Etheta}) of
$2h$-harmonics $E^{\theta^0}_{2h_1}$ with $\theta^0 \in
(-\pi/2,\pi/2]$ is left invariant under $\omega$-JAC or GS-LEX
smoothing operator and correction operator, the representation of
two level iteration matrix of $\BM_2$ on $E^{\theta^0}_{2h_1}$ is
given by a $2 \times 2$ matrix as follows:
\begin{eqnarray}
\widetilde{\BM}_2 &=& \left[
\widetilde{\BI}_1 - \mu_1\left(
\widetilde{\BI}_1 -\left[
\begin{array}{c}
    \widetilde{\BS}_1(\theta^0)    \\
      \widetilde{\BS}_1(\theta^1)\\
  \end{array} \right]_D\right)\right] \nn \\
 && \cdot \left[ \widetilde{\BI}_1 - \mu_0
\left[\begin{array}{c}
    \widetilde{\BI}_0^1(\theta^0)   \\
    \widetilde{\BI}_0^1(\theta^1)\\
  \end{array} \right]\widetilde{\BA}_0(2\theta^0)^{-1}\left[\begin{array}{c}
    \widetilde{\BI}_1^0(\theta^0)   \\
    \widetilde{\BI}_1^0(\theta^1)\\
  \end{array} \right]^t\left[ \begin{array}{c}
    \widetilde{\BA}_1(\theta^0)\\
     \widetilde{\BA}_1(\theta^1)\\
  \end{array}  \right]_D
\right].\label{repre-mfc}
\end{eqnarray}
where $\widetilde{\BI}_1$ is $2\times 2$ identity matrix and the
subscript-$_D$ denotes the transformation of a vector into a
diagonal matrix. Then the spectral radius of
$\widetilde{\BM}_2$ for different $\theta^0\in \Theta_{\rm low}$
can be obtained analytically and numerically.
\begin{figure}[!htbp]
\centering
    \includegraphics[width=2.4in]{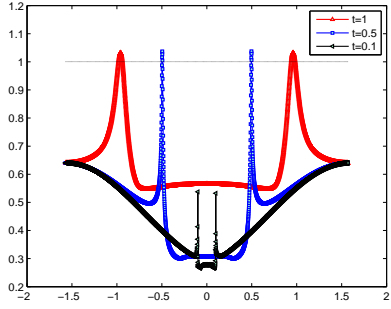}
   \caption{\small $\rho(\widetilde{\BM}_2)$ with $\omega=0.6$ over $\theta^0\in \Theta_{\rm low}$ for $t=1$, $t=0.5$ and $t=0.1$.}\label{fig-jacobi-sl-fc-c}
\end{figure}

Figure \ref{fig-jacobi-sl-fc-c} shows the spectral radius of
$\widetilde{\BM}_2$ with $\omega=0.6$ over $\theta^0\in \Theta_{\rm
low}$ under $\omega$-JAC relaxation for different mesh size $t$ on
the finest grid. If no confusion is possible, we will always denote
$t=$constant for $t = \kappa h_L$ on the finest grid. We observe
that when $t=1$, for most of the frequencies $\theta^0\in
\Theta_{\rm low}$ the amplification factor is smaller than 1. The
amplification factor tends to larger than 1 only under a few
frequencies. Actually, the appearance of such a resonance is caused
by the coarse grid correction and originates from the inversion of
the coarse grid discretization symbol $\widetilde{\BA}_0(2\theta^0)$
in (\ref{repre-mfc}). Based on this reason, the coarsest grid is
chosen to satisfies the mesh condition $\kappa h/p\leq 2$ in our
algorithm. The good performance of two level method for $t=0.5$ and
$t=0.1$ indicates that when the mesh is fine enough to capture the
character of solution, standard smoother works out. We will utilize
the GMRES smoothing when $\kappa h/p\geq 0.5$ and perform weighted
Jacobi or Gauss-Seidel smoothing on those relatively fine grids.

The main idea of the three level analysis is to recursively apply
the previous two level analysis. First, we define the four
dimensional $4h$-harmonics by
\[
E_{4h_2}^{\theta^0} :={\rm span} \{ \varphi_{h_2}(\theta^{00},x) ,
\varphi_{h_2}(\theta^{01},x) , \varphi_{h_2}(\theta^{10},x),
\varphi_{h_2}(\theta^{11},x) \},
\]
where $\theta^{\alpha0} = \frac{\theta^{\alpha}}{2},\theta^{\alpha1}
=
 \frac{\theta^{\alpha}}{2}-{\rm sign}( \frac{\theta^{\alpha}}{2})\pi,\alpha=0,1$.
Similar to the two level method, the iteration matrix can be deduced
to be
\[
\BM_3 = \Big(\BI_2 - \mu_2 (\BI_2 - \BS_2)\Big)
\Big(\BI_2 - \mu_1 \BI^2_1 (\BI_1 - \BS_1)(\BA_1)^{-1}\BI^1_2
\BA_2\Big)\Big(\BI_2 - \mu_0 \BI^2_0(\BA_0)^{-1} \BI^0_2
\BA_2\Big).
\]
 It is easy to see that the three level operator leaves
the space of $4h$-harmonics $E_{4h_2}^{\theta^0} $ invariant (cf.
\cite{WJ04}) for any $\theta^0 \in \Theta_{\rm low}$. This yields a
block diagonal representation of $\BM_3$ with the following $4\times
4$ matrix $\widetilde{\BM}_3$:
\begin{eqnarray}
\widetilde{\BM}_3 &=& \left[ \widetilde{\BI}_2 - \mu_2\left(
\widetilde{\BI}_2 -\widetilde{\BS}_2(\theta)\right)
\right] \nn \\
&&\cdot \left[ \widetilde{\BI}_2 - \mu_1 \widetilde{\BI}_1^2(\theta)
 \left(
\widetilde{\BI}_1 -\left[
\begin{array}{c}
    \widetilde{\BS}_1(\theta^0)    \\
      \widetilde{\BS}_1(\theta^1)\\
  \end{array} \right]_D\right) \left[  \begin{array}{cc}
    \widetilde{\BA}_1(\theta^0)   \\
     \widetilde{\BA}_1(\theta^1)\\
  \end{array}  \right]_D^{-1} (\widetilde{\BI}_2^1(\theta))^t\widetilde{\BA}_2(\theta)
\right]\label{repre-mc-3} \\
&& \cdot \left[ \widetilde{\BI}_2 - \mu_0 \widetilde{\BI}_1^2(\theta)
\widetilde{\BI}_0^1(\theta) \widetilde{\BA}_0(2\theta^0)^{-1}
(\widetilde{\BI}_1^0(\theta))^t (\widetilde{\BI}_2^1(\theta))^t \widetilde{\BA}_2(\theta)
\right],\nn
\end{eqnarray}
where $\widetilde{\BI}_2 $ is $4\times 4$ identity matrix,
$\widetilde{\BS}_2(\theta)  = \left[
\begin{array}{cc}
    \widetilde{\BS}_2(\theta^{00})   \\
    \widetilde{\BS}_2(\theta^{01})    \\
      \widetilde{\BS}_2(\theta^{10})    \\
     \widetilde{\BS}_2(\theta^{11})    \\
  \end{array} \right]_D$, $\widetilde{\BA}_2(\theta)
= \left[  \begin{array}{cc}
    \widetilde{\BA}_2(\theta^{00})   \\
     \widetilde{\BA}_2(\theta^{01})\\
     \widetilde{\BA}_2(\theta^{10})\\
     \widetilde{\BA}_2(\theta^{11})\\
  \end{array}  \right]_D$,
$\widetilde{\BI}_1^2(\theta)  = \left[
\begin{array}{cc}
    \widetilde{\BI}_1^2(\theta^{00})  \quad 0 \\
    \widetilde{\BI}_1^2(\theta^{01})  \quad 0  \\
     0 \quad \widetilde{\BI}_1^2(\theta^{10})    \\
     0 \quad \widetilde{\BI}_1^2(\theta^{11})    \\
  \end{array} \right]$, $\widetilde{\BI}_0^1(\theta) = \left[\begin{array}{c}
    \widetilde{\BI}_0^1(\theta^0)   \\
    \widetilde{\BI}_0^1(\theta^1)\\
  \end{array} \right]$, and $\widetilde{\BI}_2^1(\theta) ,\widetilde{\BI}_1^0(\theta)  $ are defined similarly.

\begin{figure}[htbp]
\centering
    \includegraphics[width=2.4in]{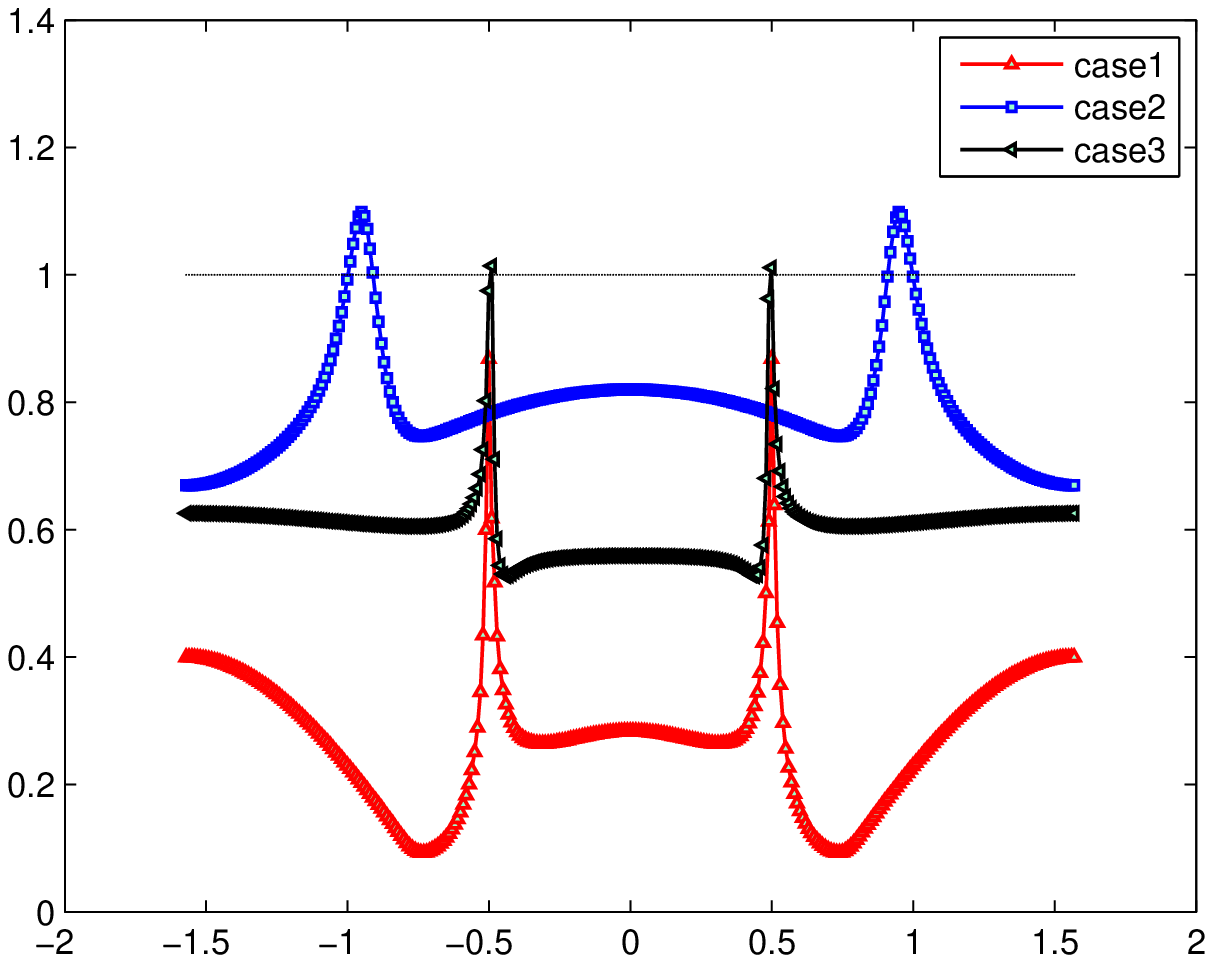}
   \caption{\small Case 1 (t=0.5): $\rho(\widetilde{\BM}_2)$ over $\theta^0 \in \Theta_{\rm low}$, Case 2 (t=0.5) and Case 3 (t=0.25) denote for
   $\rho(\widetilde{\BM}_3)$.
    }\label{fig-jacobi-3d}
\end{figure}

\smallskip

\smallskip

It can be seen from Figure \ref{fig-jacobi-3d}, when the mesh size
on the coarsest grid is determined to be the same, the performance
of two  and three level methods behave similarly, although the two
level method has smaller spectral radius. We also observe that a
coarser initial grid used for $\BM_3$ will deteriorate its
convergence.

\section{Numerical results}\label{sec-numer}
We will present two numerical examples to demonstrate Algorithm \ref{alg}
in two dimension. Our multilevel
algorithm is used as a preconditioner in outer GMRES iterations
(PGMRES). The level $l$ which distinguishes the smoothing strategy
satisfies $\kappa h_l / p \thickapprox 0.5$. We always use
Gauss-Seidel relaxation when $\kappa h_l/ p < 0.5$ and GMRES
relaxation otherwise in Algorithm \ref{alg}. The smoothing step
$\{m_i\}^4_{i=1}$ is chosen as two if there is no any annotation. At
the $l$-th level, the discrete problem is ${\bf A}_l {\bf u}_l =
{\bf F}_l$. Let $ {\bf r}^n_l = {\bf F}_l - {\bf A}_l {\bf u}^n_l$
be the residual with respect to the $n$-th iteration. The PGMRES
algorithm stops when
\begin{equation}\label{e:6}
\|{\bf r}^n_l\|/\|{\bf r}^0_l\| \leq 10^{-6} ,\nn
\end{equation}
where $\|v\|$ is the $L^2$ norm of the vector $v$.
The number of iteration steps required to achieve the desired accuracy is denoted by {\bf iter}.

\begin{exm}
{\rm We consider a two dimensional Helmholtz equation with the first order
absorbing boundary condition (cf. \cite{Wu09,Wu12}):
\begin{align*}
-\Delta u - \kappa^2 u &= f:= \frac{\sin(\kappa r)}{r}\quad {\rm in}
\
\Omega,  \\
\frac{\partial u}{\partial n} + {\ii} \kappa u &= g \quad {\rm on} \
\partial \Omega.
\end{align*}
Here $\Omega$ is a unit square with center $(0,0)$ and $g$ is chosen
such that the exact solution is
\[
u = \frac{\cos(\kappa r)}{\kappa} - \frac{\cos \kappa+ \ii \sin
\kappa}{\kappa(J_0(\kappa)+{\ii} J_1(\kappa))} J_0 (\kappa r),
\]
where $J_{\nu}(z)$ are Bessel functions of the first kind.
}

\begin{table}[!h]
\caption{\footnotesize Iteration  number of PGMRES based on Algorithm
\ref{alg} for the HDG-P1 in the
cases $\kappa=50,100,200$ with coarsest grid size $\kappa h_0/p\approx 2$.}\label{ex1-1}
\begin{center}
\footnotesize
\begin{tabular}{|c|c|c|ccc|}
\hline
\multirow{4}{*}{$\kappa=50$} & \multicolumn{2}{|c|}{Level}      &   3    &  4     &  5  \\
\cline{2-6}
 & \multicolumn{2}{|c|}{DOFs}                                  & 98816   & 394240 & 1574912   \\
\cline{2-6}
 & \multicolumn{2}{|c|}{iter (P1)}                                   &   20    &  16     &   15      \\
 \hline \hline

\multirow{4}{*}{$\kappa=100$} & \multicolumn{2}{|c|}{Level}       &   3    &  4     &  5  \\
\cline{2-6}
 & \multicolumn{2}{|c|}{DOFs}                                  & 394240 & 1574912 & 6295552 \\
\cline{2-6}
 & \multicolumn{2}{|c|}{iter (P1)}                               &  30  &  20 &  19 \\
\hline \hline

\multirow{4}{*}{$\kappa=200$} & \multicolumn{2}{|c|}{Level}   &    2    &   3    &  4      \\
\cline{2-6}
 & \multicolumn{2}{|c|}{DOFs}                                & 394240 & 1574912 & 6295552      \\
\cline{2-6}
 & \multicolumn{2}{|c|}{iter (P1)}                     &  76    &   54   &  35    \\
\hline

\end{tabular}
\end{center}
\end{table}

\begin{table}[!h]
\caption{\footnotesize Iteration number  of PGMRES based on Algorithm
\ref{alg} for the HDG-P2 in the
cases $\kappa=50,200,360$ with coarsest grid size $\kappa h_0/p\approx 2$.}\label{ex1-2}
\begin{center}
\footnotesize
\begin{tabular}{|c|c|c|ccc|}
\hline
\multirow{4}{*}{$\kappa=50$} & \multicolumn{2}{|c|}{Level}      &   3    &  4     &  5  \\
\cline{2-6}
 & \multicolumn{2}{|c|}{DOFs}                                 & 37248 & 148224  & 591360    \\
\cline{2-6}
 & \multicolumn{2}{|c|}{iter (P1)}                                   & 11      &  10    &  9    \\
 \hline \hline

\multirow{4}{*}{$\kappa=200$} & \multicolumn{2}{|c|}{Level}       &   2    &  3     &  4  \\
\cline{2-6}
 & \multicolumn{2}{|c|}{DOFs}                                 & 148224 & 591360 & 2362368  \\
\cline{2-6}
 & \multicolumn{2}{|c|}{iter (P1)}                             &  30 &  29  &  24    \\
\hline \hline

\multirow{4}{*}{$\kappa=360$} & \multicolumn{2}{|c|}{Level}   &    2    &   3    &  4      \\
\cline{2-6}
 & \multicolumn{2}{|c|}{DOFs}                                 &    591360    &  2362368     &   9443328     \\
\cline{2-6}
 & \multicolumn{2}{|c|}{iter (P1)}                     &  30    &   31   &   25 \\
\hline

\end{tabular}
\end{center}
\end{table}

\end{exm}
\smallskip

\begin{figure}[!h]
\centering
    \includegraphics[width=2.4in]{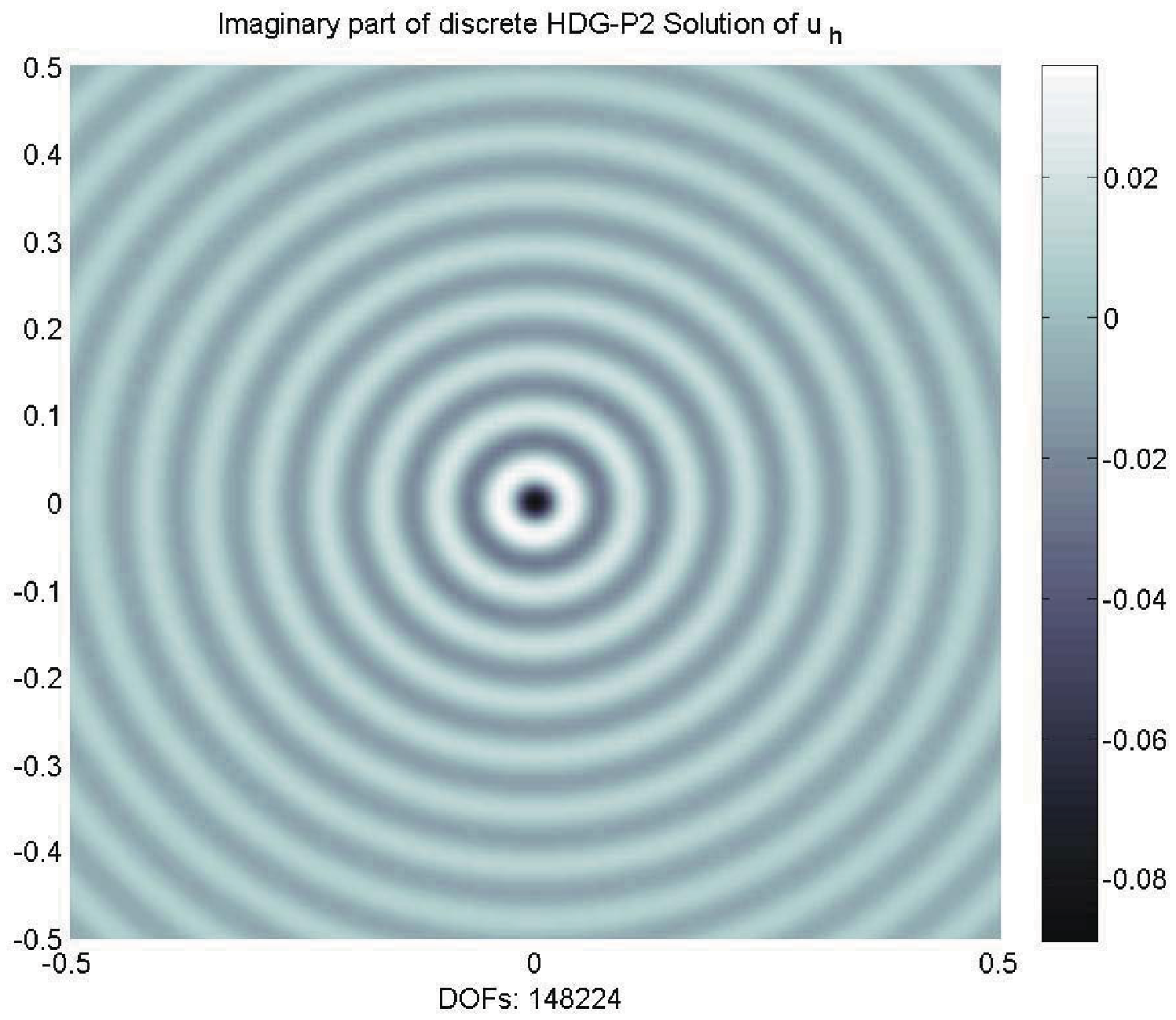}
    \includegraphics[width=2.4in]{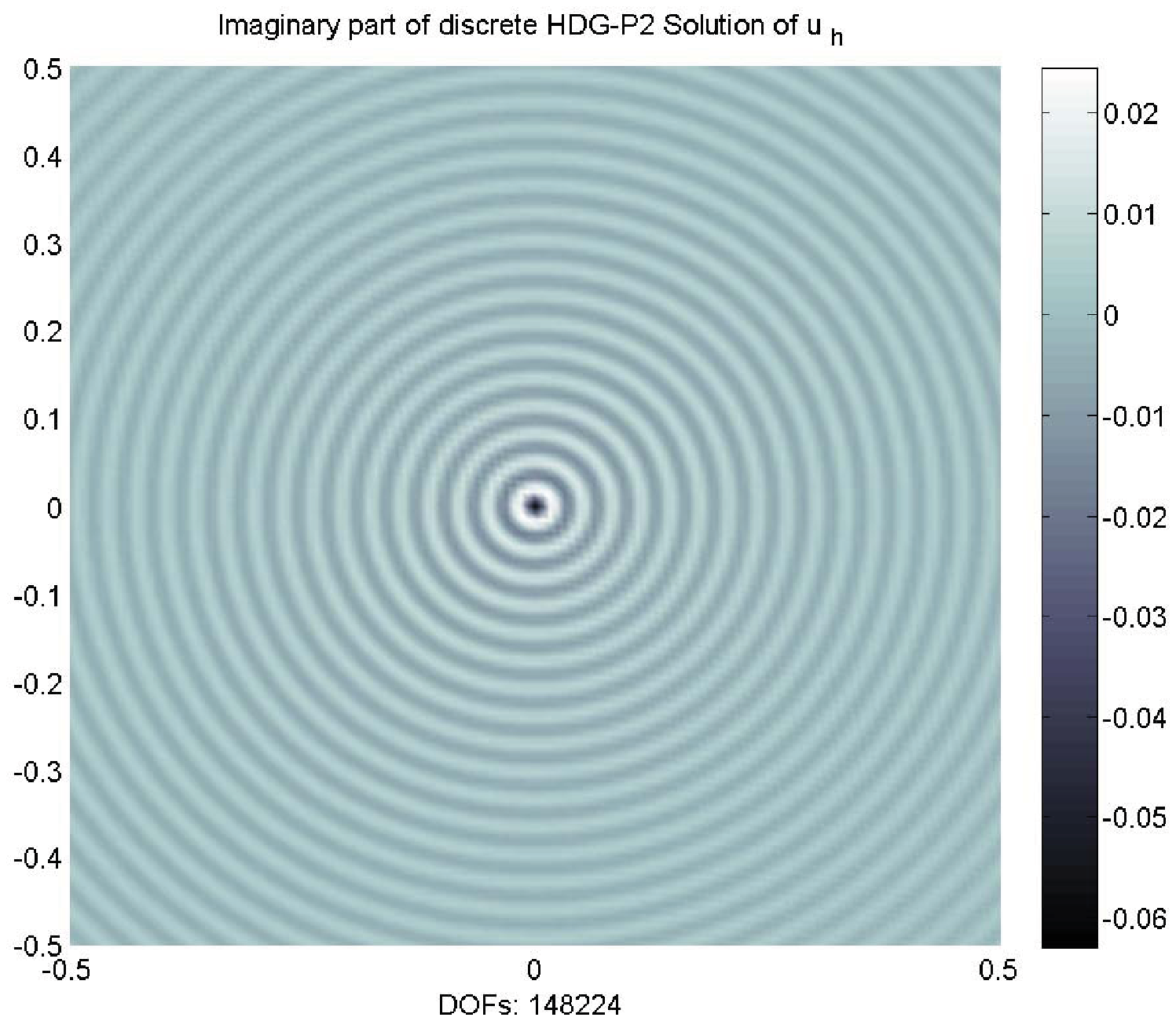}
    \caption{\footnotesize Surface plot of imaginary part of discrete HDG-P2 solutions for $\kappa=100$ (left) and $\kappa=200$ (right)
    on the grid with mesh condition $\kappa h / p \approx 0.55$ (left) and $1.1$ (right). }\label{ex1-fig}
\end{figure}

In this example, the coarsest level of multilevel method is chosen
to satisfy $\kappa h_0/p \thickapprox 2$ for $\kappa\leq 360$. For
$400 \leq \kappa \leq 700$, we choose the coarsest grid condition
with the same mesh size $h_0$ such that $\kappa h_0/p\thickapprox
1.1 \thicksim 1.9$. We can observe from Table \ref{ex1-1} and Table
\ref{ex1-2} that the iteration number is mesh independent for fixed
$\kappa$, and it increases mildly with large wave number. We note
that for the piecewise linear polynomial, HDG method does not have
the advantage of saving degrees of freedom, while in the case of
piecewise quadratic polynomial, the ratio between the number of
degrees of freedom for HDG  and standard DG method is about $\frac{3}{4}$. And
the higher the polynomial degree is, the lower the ratio is. Hence
we focus on HDG-P2 for the performance of our algorithm in this
example.

Figure \ref{ex1-fig} displays the surface plot of imaginary part of
discrete HDG-P2 solutions for $\kappa=100,200$ on the grid with mesh
condition $\kappa h / p \approx 0.55$ and $1.1$ respectively.
Indeed, the discrete solutions have correct shapes and amplitudes as
the exact solutions. We also test the performance of PGMRES based on
Algorithm \ref{alg} with different smoothing steps. Table
\ref{ex1-3} shows that when it takes two steps of smoothing the
iteration number is much small with respect to one smoothing step.
But the advantage of reducing the iteration number by adding more
smoothing steps is deteriorating, and more steps of GMRES smoothing
requires more memory to store data in the computation. Hence, in the
following we will use two smoothing steps
in Algorithm \ref{alg}.

\begin{table}[!h]\footnotesize
\caption{\footnotesize Iteration number of PGMRES based on Algorithm
\ref{alg} for HDG-P2 with
different  smoothing steps
($m_i=m,i=1,\cdots,4$, $\kappa=100$).}\label{ex1-3}
\begin{center}
\begin{tabular}{|c|c|ccc|}
\hline
\multirow{5}{*}{HDG-P2} & Level  & 3 & 4 & 5 \\
\cline{2-5}
& DOFs   & 148224  & 591360  & 2362368 \\
\cline{2-5}
& iter ($m=1$)   & 30  & 27  & 21 \\
\cline{2-5}
& iter ($m=2$)    & 18  & 15  & 13  \\
\cline{2-5}
& iter ($m=3$)     &  15 & 13   &  11 \\
\hline
\end{tabular}
\end{center}
\end{table}

\begin{table}[!h]
\caption{\footnotesize Iteration number of PGMRES based on Algorithm
\ref{alg} for HDG-P2 in the cases $\kappa=400,500,600,700$.}\label{ex1-4}
\begin{center}
\footnotesize
\begin{tabular}{|c|c|c|cc|}
\hline
\multirow{6}{*}{HDG-P2} & \multicolumn{2}{|c|}{Level}      &   2    &  3     \\
\cline{2-5}
 & \multicolumn{2}{|c|}{DOFs}                                  &  2362368 & 9443328  \\
\cline{2-5}
 & \multicolumn{2}{|c|}{iter ($\kappa=400$)}                                   &   11     &  11          \\
 \cline{2-5}
 & \multicolumn{2}{|c|}{iter ($\kappa=500$)}                                    &   16   &   16    \\
 \cline{2-5}
 & \multicolumn{2}{|c|}{iter ($\kappa=600$)}                                    &   26   &   27    \\
 \cline{2-5}
 & \multicolumn{2}{|c|}{iter ($\kappa=700$)}                                    &   45   &   50   \\
\hline
\end{tabular}
\end{center}
\end{table}

Table \ref{ex1-4} shows the iteration number of PGMRES based on
Algorithm \ref{alg} for the cases $\kappa=400,500,600,700$ with the
same coarsest grid $h_0\approx 0.00552$ such that $\kappa h_0/p\thickapprox 1.1 \thicksim
1.9$, we can see that
the  iteration number is still stable and acceptable.

\bigskip

\begin{exm}
{\rm We consider a cave model in a unit square domain with center
$(0,0)$. Figure \ref{ex2-fig} shows the computational domain and the
variation of wave number in different subdomains which are indicated
by different colors.

\begin{figure}[!h]
\centering
    \includegraphics[width=2.4in]{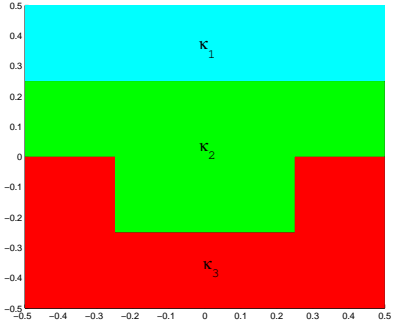}
    \caption{\footnotesize The computational domain of cave problem with different wave number indicated. }\label{ex2-fig}
\end{figure}

We denote by $\kappa_3=q_2 \kappa_2=q_1 \kappa_1$. The Robin boundary
condition (\ref{HE}) is set to be $g=0$ and the external force
$f(x)$ in (\ref{HSE}) is a narrow Gaussian point source (cf.
\cite{Engquist2}) located at the center $(0,0)$:
\[
f(x_1,x_2) = \frac{1}{{\ii \kappa}}e^{ -(\frac{4\kappa}{\pi})^2 (
x_1^2 + x_2^2 )  }.
\]

\begin{table}[!h]
\caption{\footnotesize Iteration number of PGMRES based on Algorithm
\ref{alg} for HDG-P1, HDG-P2 and HDG-P3 discretizations for the case
$\kappa_3=200,q_2=2,q_1=3$.}\label{ex2-1}
\begin{center}
\footnotesize
\begin{tabular}{|c|c|c|ccc|}
\hline
\multirow{4}{*}{HDG-P1} & \multicolumn{2}{|c|}{Level}      &   2    &   3    &  4      \\
\cline{2-6}
 & \multicolumn{2}{|c|}{DOFs}                                  &  221952   & 886272  &  3542016  \\
\cline{2-6}
 & \multicolumn{2}{|c|}{iter}                                   &   166    &   129     &     54    \\
 \hline \hline

\multirow{4}{*}{HDG-P2} & \multicolumn{2}{|c|}{Level}       &   2    &   3    &  4      \\
\cline{2-6}
 & \multicolumn{2}{|c|}{DOFs}                                  & 83520 & 332928  &  1329408 \\
\cline{2-6}
 & \multicolumn{2}{|c|}{iter}                               &   53 & 66  &  54  \\
\hline \hline

\multirow{4}{*}{HDG-P3} & \multicolumn{2}{|c|}{Level}   &    2    &   3    &  4      \\
\cline{2-6}
 & \multicolumn{2}{|c|}{DOFs}                                &  197632 &  788480 &   3149824     \\
\cline{2-6}
 & \multicolumn{2}{|c|}{iter}                     &  23   &   26    &  26   \\
\hline

\end{tabular}
\end{center}
\end{table}

For this problem we firstly test the performance of our multilevel
method for HDG method with different polynomial order
approximations. In Table \ref{ex2-1}, the iteration number for
HDG-P1 and HDG-P2 are based on the coarsest grid condition with
$\kappa_3 h_0 / p \thickapprox 2.95$. We can see that the multilevel method
is more stable when the higher polynomial order approximation is applied. But for HDG-P3, the iteration number will be more than 200
with the above coarsest grid condition. Thus, we utilize the
coarsest grid condition $\kappa_3 h_0 / p \thickapprox 1.47$ for
HDG-P3, then the convergence of PGMRES becomes stable. Comparing the
iteration number for HDG-P1 and HDG-P3, one can also observe that
when the degrees of freedom are similar on each level, the
convergence of PGMRES is more stable for higher polynomial order
approximation. In the following, we focus on the performance of
HDG-P2.

\begin{figure}[!h]
\centering
    \includegraphics[width=2.4in,height=2in]{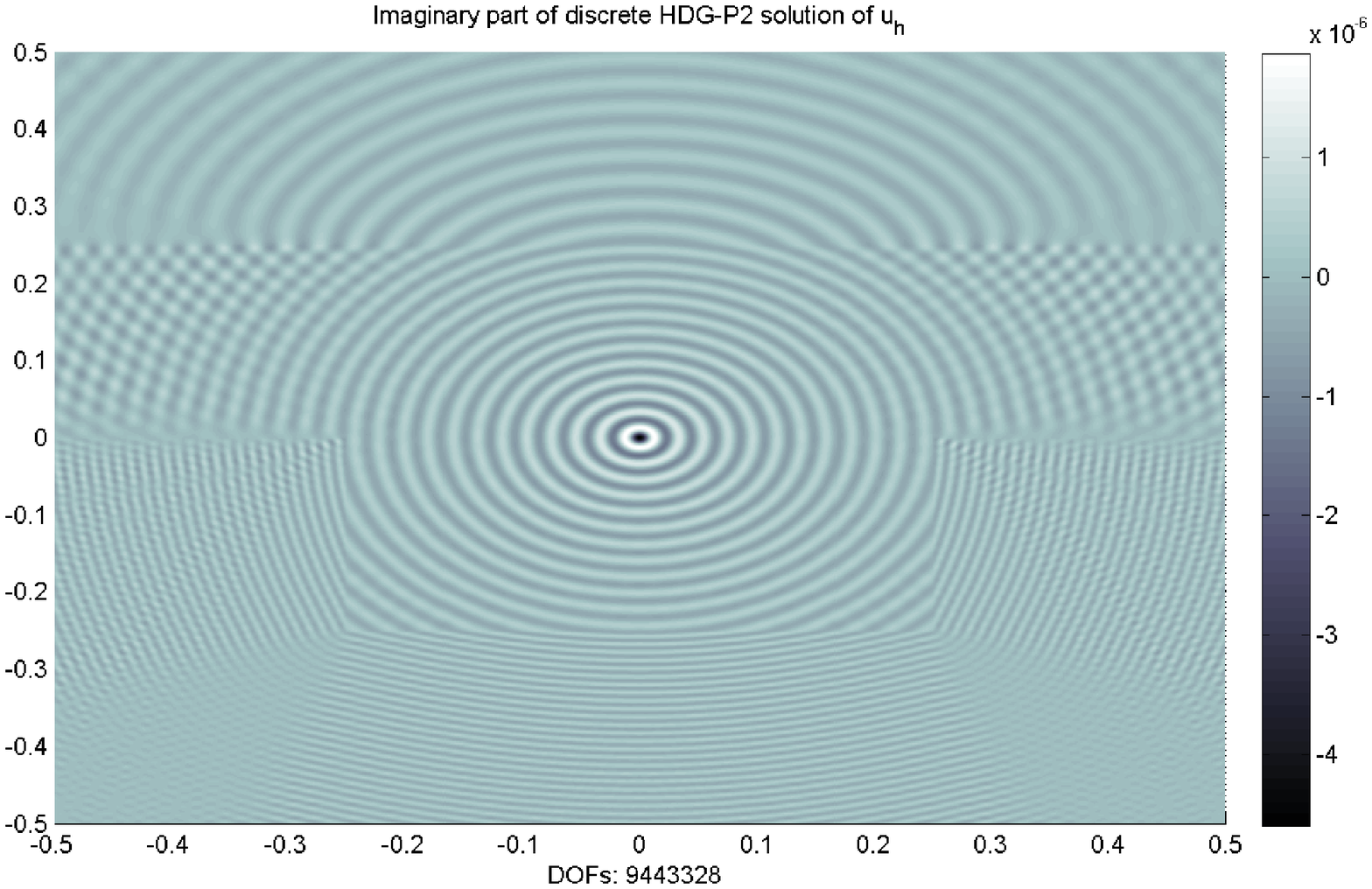}
    \includegraphics[width=2.4in,,height=2in]{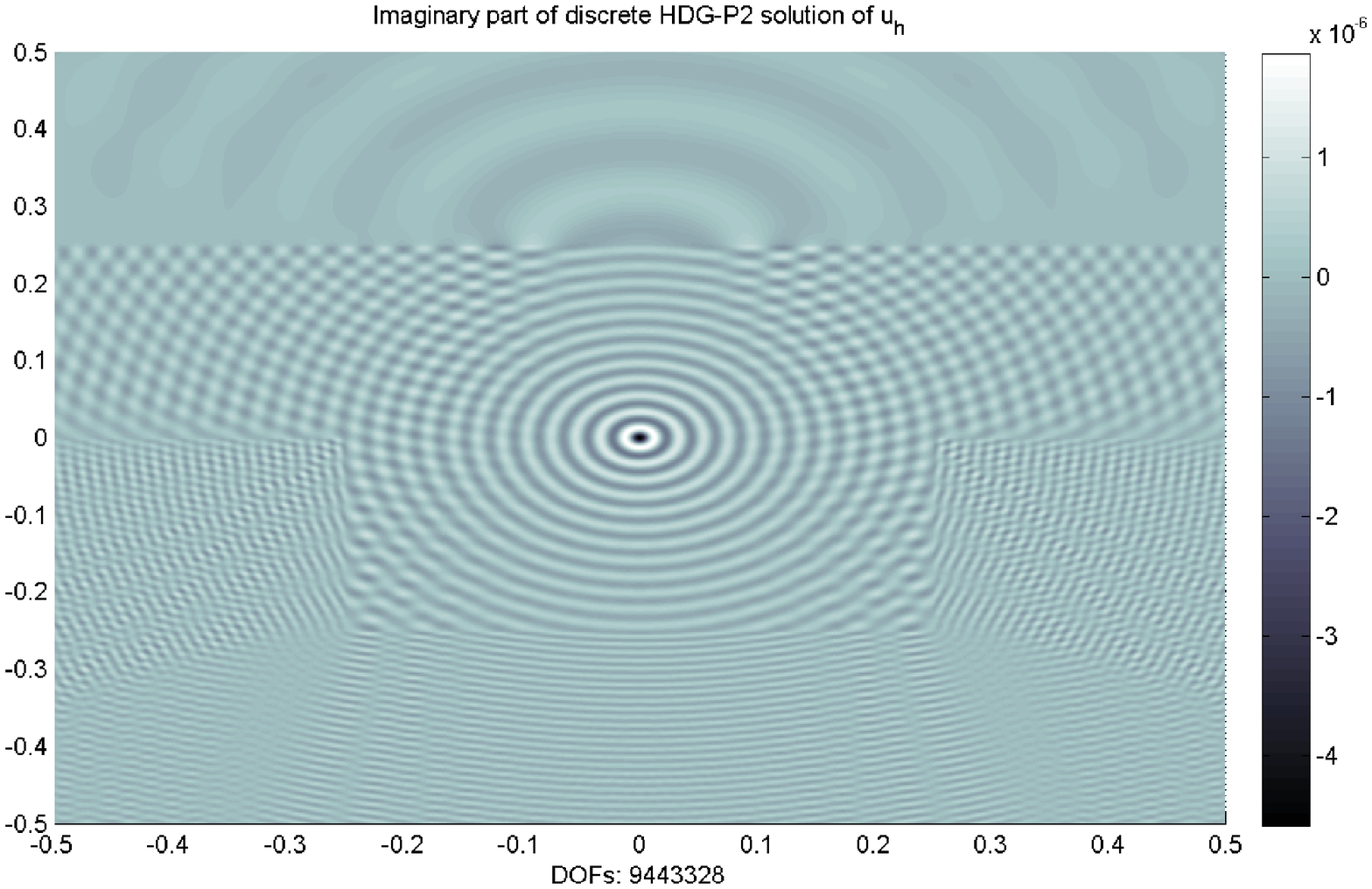}
    \caption{\footnotesize Surface plot of imaginary part of discrete HDG-P2 solutions for $\kappa_3=600, q_2=2, q_1=3$ (left) and $\kappa_3=600, q_2=2, q_1=10$ (right)
    on the grid with mesh condition $\kappa_3 h / p \approx 0.4$. }\label{ex2-fig}
\end{figure}

Figure \ref{ex2-fig} displays the surface plot of imaginary part of
discrete HDG-P2 solutions $u_h$ for $\kappa_3=600$ with $q_2=2,q_1=3$
and $q_1=10$ on the grid with mesh condition $\kappa_3 h / p \approx
0.4$. The iteration number of PGMRES based on Algorithm \ref{alg}
for different $\kappa_3,q_2$ and $q_1$ are listed in Table \ref{ex2-2}.
The larger jump of wave numbers between different subdomain will
deteriorate the convergence of the algorithm. For instance, the
iteration number for the case $q_1=10$ are much more than that for
$q_1=3$. But for fixed $\kappa_3,q_2$ and $q_1$, the iteration number are
robust on different levels.

\begin{table}[h]
\caption{\footnotesize Iteration number of PGMRES based on Algorithm
\ref{alg} for HDG-P2 discretizations for the cases $\kappa_3=400$
$(q_2=2,q_1=3,q_1=10)$ and $\kappa_3=600$ $(q_2=2,q_1=3,q_1=10)$.}\label{ex2-2}
\begin{center}
\footnotesize
\begin{tabular}{|c|c|c|cc|}
\hline
\multirow{4}{*}{$\kappa=400$} & \multicolumn{2}{|c|}{Level}      &   2     &     3    \\
\cline{2-5}
 & \multicolumn{2}{|c|}{DOFs}                                  &  2362368  &  9443328     \\
\cline{2-5}
 & \multicolumn{2}{|c|}{iter $(q_2=2,q_1=3)$}                                   &   14     &       14    \\
 \cline{2-5}
 & \multicolumn{2}{|c|}{iter $(q_2=2,q_1=10)$}                                    &  28    &     27  \\
\hline \hline

\multirow{4}{*}{$\kappa=600$} & \multicolumn{2}{|c|}{Level}       &    2    &    3    \\
\cline{2-5}
 & \multicolumn{2}{|c|}{DOFs}                                 &  2362368  &  9443328    \\
\cline{2-5}
 & \multicolumn{2}{|c|}{iter $(q_2=2,q_1=3)$}                               &  24  &   26    \\
\cline{2-5}
 & \multicolumn{2}{|c|}{iter $(q_2=2,q_1=10)$}                                &  45  & 46    \\
\hline
\end{tabular}
\end{center}
\end{table}

}

\end{exm}

\bigskip

\end{document}